\documentclass[a4paper]{amsart}

\usepackage[utf8]{inputenc}
\usepackage[T1]{fontenc}

\usepackage{amsthm, amssymb, amsmath, amsfonts, mathrsfs}

\usepackage{mathtools}
\usepackage[ocgcolorlinks, colorlinks=true, pdfstartview=FitV, linkcolor=blue, citecolor=blue, urlcolor=blue, pagebackref=false]{hyperref}

\usepackage{microtype}






\newtheorem{thm}{Theorem}[section]

\newtheorem{lem}[thm]{Lemma}

\theoremstyle{remark}

\renewcommand{\le}{\leqslant}

\renewcommand{\ge}{\geqslant}
\renewcommand{\leq}{\leqslant}
\renewcommand{\geq}{\geqslant}
\renewcommand{\subset}{\subseteq}
\newcommand{\mcl}{\mathcal}
\newcommand{\E}{\mathbb{E}}
\newcommand{\B}{\mathbb{B}}

\newcommand{\N}{\mathbb{N}}
\renewcommand{\L}{\mathscr{L}}
\newcommand{\K}{\mathscr{K}}

\newcommand{\Ll}{\left}
\newcommand{\Rr}{\right}
\newcommand{\1}{\mathbf{1}}
\newcommand{\R}{\mathbb{R}}

\newcommand{\Z}{\mathbb{Z}}
\renewcommand{\P}{\mathbb{P}}
\newcommand{\cP}{\mathscr{P}}

\newcommand{\ov}{\overline}

\newcommand{\eps}{\varepsilon}
\def\d{{\mathrm{d}}}

\newcommand{\msf}{\mathsf}

\newcommand{\h}{\mathsf{h}}

\newcommand{\tr}{\mathrm{Tr}}
\newcommand{\la}{\langle}
\newcommand{\ra}{\rangle}
\newcommand{\dr}{\partial}

\renewcommand{\emptyset}{\varnothing}
\newcommand{\T}{\mathbb{T}}

\renewcommand{\a}{\mathbf{a}}
\newcommand{\Id}{\mathbf{I}_d}
\renewcommand{\b}{\mathbf{b}}
\newcommand{\A}{\mathsf{A}}
\newcommand{\BB}{\mathsf{B}}
\newcommand{\ah}{{\overbracket[1pt][-1pt]{\a}}}

\newcommand{\G}{\mathcal{G}}
\newcommand{\Qh}{\mathsf{Q}}
\newcommand{\LL}{\mathcal{L}}
\DeclareMathOperator{\supp}{Supp}

\numberwithin{equation}{section}

\title{On generalized Gaussian free fields and stochastic homogenization}
\author{Yu Gu, Jean-Christophe Mourrat}

\address[Yu Gu]{Department of Mathematics, Building 380, Stanford University, Stanford, CA, 94305, USA}

\address[Jean-Christophe Mourrat]{ENS Lyon, CNRS, 46 allée d'Italie, 69007 Lyon, France}

\begin{document}
\begin{abstract}

We study a generalization of the notion of Gaussian free field (GFF). Although the extension seems minor, we first show that a generalized GFF does not satisfy the spatial Markov property, unless it is a classical GFF. In stochastic homogenization, the scaling limit of the corrector is a possibly generalized GFF described in terms of an ``effective fluctuation tensor'' that we denote by $\Qh$. We prove an expansion of $\Qh$ in the regime of small ellipticity ratio. This expansion shows that the scaling limit of the corrector is not necessarily a classical GFF, and in particular does not necessarily satisfy the Markov property.

\bigskip

\noindent \textsc{MSC 2010:} 60G60, 35R60, 35B27.

\medskip

\noindent \textsc{Keywords:} Gaussian free field, Markov property, stochastic homogenization, corrector.

\end{abstract}
\maketitle
%
%
%
%
%
%
%
%
%
\section{Introduction}

This paper is motivated by recent results in the stochastic homogenization of operators of the form $-\nabla \cdot \a \nabla $, where $x \mapsto \a(x)$ is a random stationary field over $\R^d$ with strong decorrelation properties, and valued in the set $\mathsf{Sym}_d^+$ of $d$-by-$d$ symmetric positive-definite matrices. 
In this context, we aim to obtain a precise description of the large-scale behavior of solutions of equations involving this operator, and in particular of the corrector. 

The corrector in the direction $\xi \in \R^d$ can be defined, up to an additive constant, as the unique sub-linear function $\phi_\xi : \R^d \to \R$ such that $-\nabla \cdot \a (\xi + \nabla \phi_\xi) = 0$. 
The minimizer in $H^1_0(U)$ of the mapping 
$v \mapsto \int_U (\xi + \nabla v) \cdot \a (\xi + \nabla v)$, where $U$ is a large domain, offers a finite-volume approximation of the corrector. This minimization of a random quadratic functional suggests a parallel with gradient Gibbs measures, where a determinimistic functional of quadratic type is used to produce a probability measure via the Gibbs principle. Such gradient Gibbs measures are known to rescale to Gaussian free fields (GFFs), see \cite{nadspe,gos,mil}. It is therefore natural to conjecture the same scaling limit for the corrector \cite{berbis}. 

Under assumptions described more precisely below, the scaling limit of the corrector was identified in \cite{mourrat2014correlation,MN} as an element of a slightly larger class of random fields, which we will call \emph{generalized} GFFs (see also \cite{gu-mourrat}).
Intuitively, we define the generalized GFF as the field $\Phi$ such that $\nabla \Phi$ is ``as close as possible'' to a white noise vector field. More precisely, a generalized GFF is a random field $\Phi$ solving 
\begin{equation}
\label{e.def.Phi}
-\nabla \cdot \ah \nabla \Phi = \nabla \cdot W,
\end{equation}
where $\ah \in \mathsf{Sym}_d^+$ and $W$ is a vector-valued white noise field with covariance matrix $\Qh \in \mathsf{Sym}_d^+$. For simplicity, we will only consider such equations in the full space $\R^d$ with $d \ge 3$, but our arguments also cover domains with suitable boundary condition, or the addition of a massive term, with only minor modifications. We recall that a classical GFF is a Gaussian field whose covariance function is the Green function of $-\nabla \cdot \mathbf{b} \nabla$, for some $\mathbf{b} \in \mathsf{Sym}_d^+$. When $\ah$ and $\Qh$ are proportional, the field $\Phi$ defined by \eqref{e.def.Phi} is a classical GFF, and thus satisfies the spatial Markov property \cite{sheffield}.

\medskip

The goal of this paper is twofold. In Section~\ref{s.markov}, we show that the Markov property does \emph{not} extend to the wider class of generalized GFFs. In fact, no generalized GFF satisfies the Markov property, unless it is a classical GFF. Therefore, our apparently mild extension of the notion of GFF in fact wipes out one of its fundamental properties. In Section~\ref{s.homog}, we turn to the context of homogenization, and study the effective parameters describing the generalized GFF arising as the scaling limit of the corrector. We prove an asymptotic expansion for these coefficients, in the regime of small ellipticity contrast. This expansion is of independent interest, and shows that the scaling limit of the corrector is generically \emph{not} a classical GFF, thereby justifying the relevance of the notion of generalized GFF.

\medskip

We close this introduction with two remarks. First, in order to substantiate that the definition \eqref{e.def.Phi} is consistent with the intuition that $\nabla \Phi$ is chosen to be ``as close as possible'' to a white noise vector field, it is simpler to discuss the case where the underlying state space is the torus $\T^d := [0,1)^d$. Endowing the space $L^2(\T^d,\R^d)$ with the scalar product
\begin{equation*}  
(F, G) \mapsto \int_{\T^d} F \cdot \ah^{-1} G,
\end{equation*}
we denote by $L^2_{\mathrm{pot}}(\T^d)$ the closure in $L^2(\T^d,\R^d)$ of the set
\begin{equation*}  
\{\ah \nabla f \ : \  f \in C^\infty(\T^d)\}
\end{equation*}
and denote by $L^2_{\mathrm{sol}}(\T^d)$ its orthogonal complement
\begin{equation*}  
L^2_{\mathrm{sol}}(\T^d) := \{\mathbf{g} \in L^2(\T^d,\R^d) \ : \ \forall f \in C^\infty(\T^d), \ \int_{\T^d} \nabla f \cdot \mathbf{g} = 0\},
\end{equation*}
which is the set of solenoidal (i.e.\ divergence-free) vector fields. This provides us with the Helmholtz-Hodge decomposition of $L^2(\T^d,\R^d)$ into the orthogonal sum of $L^2_{\mathrm{pot}}(\T^d)$ and $L^2_{\mathrm{sol}}(\T^d)$. If $W$ were smooth, this would allow us to interpret $\ah \nabla \Phi$ as the orthogonal projection of $W$ onto $L^2_{\mathrm{pot}}(\T^d)$. For non-smooth $W$, the intepretation remains valid by testing and duality.

\medskip

Second, we point out that although the generalized GFF fails to satisfy the Markov property, it does satisfy a related domain decomposition property: there exists a family of random fields $(\Phi_A)$ on $\R^d$ indexed by all borel subsets $A$ of $\R^d$ such that the following holds for every Borel set $A \subset \R^d$:
\begin{itemize}  
\item we have $\Phi = \Phi_A + \Phi_{A^c}$, ($A^c$ denotes the complement of $A$ in $\R^d$);
\item the random fields $\Phi_A$ and $\Phi_{A^c}$ are independent;
\item the random field $\Phi_A$ is an $\ah$-harmonic function in the interior of $A^c$. 
\end{itemize}
Indeed, these properties are easy to verify from the definition of $\Phi_A$ as solving
\begin{equation*}  
-\nabla \cdot \ah \nabla \Phi_A = \nabla \cdot (W \1_{A}).
\end{equation*}

\section{Non-Markov property}
\label{s.markov}

In this section, we show that a generalized GFF satisfies the Markov property if and only if it is a classical GFF. 

\medskip

For $k,l$ positive integers, we write $C^\infty_c(\R^k,\R^l)$ for the space of infinitely differentiable functions from $\R^k$ to $\R^l$ with compact support. For every $U \subset \R^k$,
we let $C^\infty_c(U,\R^l) = \{f \in C^\infty_c(\R^k,\R^l) : \supp f \subset U\}$, where $\supp f$ denotes the support of $f$. We simply write $C^\infty_c(U) = C^\infty_c(U,\R)$ and $C^\infty_c = C^\infty_c(\R)$.  
 We fix $d \ge 3$ and $\ah, \Qh \in \mathsf{Sym}_d^+$. We say that the random distribution $W = (W_1,\ldots,W_d)$ is a white noise vector field with covariance matrix $\Qh$ if for every $\mathbf{f} = (\mathbf{f}_1,\ldots,\mathbf{f}_d) \in C^{\infty}_c(\R^d, \R^d)$, the random variable
$$
W(\mathbf{f}) := W_1(\mathbf{f}_1) + \cdots + W_d(\mathbf{f}_d)
$$
is a centered Gaussian with variance $\int_{\R^d} \mathbf{f} \cdot \Qh \mathbf{f}$. We denote by $(\Omega, \mathsf{F},\P)$ the underlying probability space,  and by $\E$ the associated expectation. Informally,
$$
\E[W_i(x) \, W_j(y)] = \Qh_{ij} \delta(x-y),
$$
where $\delta$ is the Dirac distribution. One can extend the set of admissible test functions for $W$ to every element of $L^2(\R^d,\R^d)$ by density. We define the solution $\Phi$ to \eqref{e.def.Phi} to be the random distribution such that for every $f \in C^\infty_c(\R^d)$,
\begin{equation}
\label{e.def.true.Phi}
\Phi(f) =  - W(\nabla (-\nabla \cdot \ah \nabla)^{-1} f),
\end{equation}
where $(-\nabla \cdot \a \nabla)^{-1} f$ is the unique function $u$ tending to zero at infinity and such that $-\nabla \cdot \a \nabla u = f$.
Formally integrating by parts shows the consistency between \eqref{e.def.Phi} and \eqref{e.def.true.Phi}. The latter makes sense since for $f \in C^\infty_c(\R^d)$, the function $\nabla (-\nabla \cdot \ah \nabla)^{-1} f$ is in $L^2(\R^d, \R^d)$, as can be checked for instance using the Green representation formula.

For every open $U \subset \R^d$, we define
$$
\mcl F(U) := \sigma\{\Phi(f), f \in C^\infty_c(U)\}.
$$
We simply write $\mcl F := \mcl F(\R^d)$. For every closed $A \subset \R^d$, we define
$$
\mcl F(A) := \bigcap_{\text{open } U \supseteq A} \mcl F(U).
$$
We enlarge the $\sigma$-algebras defined above (without changing the notation) so that they contain all $\P$-negligible measurable sets. We say that the field $\Phi$ is \emph{Markovian with respect to the open set $U$} if conditionally on $\mcl F(\partial U)$, the $\sigma$-algebras $\mcl F(\ov U)$ and $\mcl F(U^c)$ are independent (where $\ov U$ denotes the closure of $U$, $U^c$ the complement of $U$, and $\partial U = \ov U \cap U^c$).
\begin{thm}
\label{t.non-markov}
Let $U$ be an open subset of $\R^d$, $U \notin \{\emptyset, \R^d\}$. The field $\Phi$ is Markovian with respect to $U$ if and only if $\Phi$ is a classical GFF.
\end{thm}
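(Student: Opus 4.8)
The plan is to prove the two implications separately. For the ``if'' direction, if $\Phi$ is a classical GFF its covariance is the Green function of some $-\nabla\cdot\b\nabla$, and then the spatial Markov property is the classical one, which I quote from \cite{sheffield}; so the content is the ``only if'' direction. By the discussion after \eqref{e.def.Phi}, $\Phi$ is a classical GFF precisely when $\Qh$ and $\ah$ are proportional: a short computation from \eqref{e.def.true.Phi} gives $\E[\Phi(f)\Phi(g)] = \int_{\R^d}\nabla\LL^{-1}f\cdot\Qh\,\nabla\LL^{-1}g$ with $\LL := -\nabla\cdot\ah\nabla$, so the covariance symbol is $\rho(\xi) = (\xi\cdot\Qh\xi)(\xi\cdot\ah\xi)^{-2}$, while a classical GFF has covariance symbol $(\xi\cdot\b\xi)^{-1}$; the identity $(\xi\cdot\Qh\xi)(\xi\cdot\b\xi) = (\xi\cdot\ah\xi)^2$ then forces proportionality by the divisibility argument below. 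Thus it suffices to show: if $\Phi$ is Markovian with respect to $U$, then $\Qh=c\,\ah$ for some $c>0$.

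The object controlling the Markov property is the reciprocal spectral density $\rho^{-1}(\xi) = (\xi\cdot\ah\xi)^2(\xi\cdot\Qh\xi)^{-1}$, the symbol of the precision (inverse covariance) operator, which is convolution by the tempered kernel $K$ defined as the inverse Fourier transform of $\rho^{-1}$. The key reduction --- and the step I expect to be the main obstacle --- is the field version of the Gaussian graphical identity ``conditional independence $=$ vanishing of the off-diagonal precision.'' Assume first $\mathrm{int}(U^c)\neq\emptyset$ (see the final remark). Since $\mcl F(U)\subset\mcl F(\ov U)$ and $\mcl F(\mathrm{int}(U^c))\subset\mcl F(U^c)$, the Markov hypothesis yields \emph{a fortiori} the conditional independence of $\mcl F(U)$ and $\mcl F(\mathrm{int}(U^c))$ given $\mcl F(\partial U)$; and $U$, $\mathrm{int}(U^c)$, $\partial U$ partition $\R^d$ with $\partial U = (U\cup\mathrm{int}(U^c))^c$, so this is conditional independence of two blocks given ``the rest.'' I would show that it forces the precision form to decouple the two blocks,
\begin{equation*}
\iint_{\R^d\times\R^d} K(x-y)\,f(x)\,g(y)\,\d x\,\d y = 0\qquad\text{for all } f\in C^\infty_c(U),\ g\in C^\infty_c(\mathrm{int}(U^c)).
\end{equation*}
The difficulty is that $\partial U$ is a null set and $\mcl F(\partial U)$ is a germ $\sigma$-algebra, so this is not the naive finite-dimensional statement; I would establish it inside the reproducing-kernel Hilbert space $\mcl H$ of $\Phi$, whose inner product is exactly the precision form with kernel $K$, using that the sharp Markov property is equivalent to $\mcl H$-orthogonality of $\overline{C^\infty_c(U)}$ and $\overline{C^\infty_c(\mathrm{int}(U^c))}$ (the generalized-field Markov criterion of Pitt and Rozanov, which I would cite or reprove via conditional expectations; note that here a single domain suffices).

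Granting the displayed vanishing, the remainder is analytic. Since $(\xi\cdot\ah\xi)^2$ is the symbol of $\LL^2$ and $(\xi\cdot\Qh\xi)^{-1}$ is the symbol of the Green operator of $-\nabla\cdot\Qh\nabla$, one has $K = \LL^2 G_{\Qh}$, where $G_{\Qh}(z)\propto(\det\Qh)^{-1/2}(z\cdot\Qh^{-1}z)^{-(d-2)/2}$ is the fundamental solution of $-\nabla\cdot\Qh\nabla$ (here $d\ge 3$). As $G_{\Qh}$ is real-analytic on $\R^d\setminus\{0\}$ and $\LL^2$ has constant coefficients, $K$ is real-analytic on $\R^d\setminus\{0\}$. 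Because $\mathrm{int}(U^c)\neq\emptyset$, the difference set $\{x-y : x\in U,\ y\in\mathrm{int}(U^c)\}$ contains a nonempty open set at positive distance from the origin, on which $K$ vanishes by the displayed identity; since $\R^d\setminus\{0\}$ is connected for $d\ge 2$, real-analyticity propagates this to $K\equiv 0$ on all of $\R^d\setminus\{0\}$. Hence $\supp K\subset\{0\}$, so $K$ is a finite combination of derivatives of $\delta_0$ and its symbol $\rho^{-1}(\xi)=(\xi\cdot\ah\xi)^2(\xi\cdot\Qh\xi)^{-1}$ is a polynomial.

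It remains to read off the proportionality. That $(\xi\cdot\ah\xi)^2(\xi\cdot\Qh\xi)^{-1}$ is a polynomial means $\xi\cdot\Qh\xi$ divides $(\xi\cdot\ah\xi)^2$ in $\mathbb{C}[\xi_1,\dots,\xi_d]$. For $d\ge 3$ the nondegenerate form $\xi\cdot\ah\xi$ has rank $\ge 3$, hence is irreducible, so the only degree-two divisors of $(\xi\cdot\ah\xi)^2$ are scalar multiples of $\xi\cdot\ah\xi$; therefore $\Qh=c\,\ah$ for some $c>0$, and $\Phi$ is a classical GFF with covariance symbol $c\,(\xi\cdot\ah\xi)^{-1}$. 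This is precisely where $d\ge 3$ enters. Finally, in the degenerate case $\mathrm{int}(U^c)=\emptyset$ one has $\ov U=\R^d$ and $\partial U=U^c$, whence $\mcl F(\partial U)=\mcl F(U^c)$ and the Markov property holds for every $\Qh$; this case is tautological, and I would note that the statement is intended for $U$ whose complement has nonempty interior.
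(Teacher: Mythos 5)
There is a genuine gap at the step you yourself flag as the main obstacle, and the fix you propose for it does not work. The implication ``Markov property $\Rightarrow$ vanishing of the off-diagonal precision form'' is \emph{not} a general theorem about Gaussian random fields: the criterion of Pitt that you would cite is one of the published arguments this paper explicitly identifies as flawed, and the Koteck\'y--Preiss counter-example (recalled in Subsection~\ref{ss.counter}) kills the general statement outright --- there the field is Markovian with respect to $(-\infty,0)$ while its precision operator is convolution with $\tfrac12 e^{-|x|}$, which is manifestly non-local. So no amount of careful bookkeeping with conditional expectations and germ $\sigma$-algebras can establish your displayed identity from the Markov hypothesis alone; one must use a quantitative regularity property of the specific field $\Phi$. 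Concretely, the missing ingredient is the bound $\int g\,\LL^{-1}g \le C\|g\|_{L^2}^2$ for $g$ supported in a fixed ball (which holds here because $\nabla(-\nabla\cdot\ah\nabla)^{-1}$ maps $L^2$-compactly-supported data into $L^2$, i.e.\ $\Phi$ is ``at least as regular as white noise''). This bound is what lets one write $\Phi(f)=\lim\bigl(\Phi(f\chi_n)+\Phi(f[1-\chi_n])\bigr)$ and conclude $\mcl H=\mcl H(\ov U)+\mcl H(U^c)$; combined with the Markov property this yields the orthogonal decomposition $\mcl H=\mcl H_0(\ov U)\oplus\mcl H(\partial U)\oplus\mcl H_0(U^c)$, from which the orthogonality $\la f,g\ra_{\LL}=0$ for $f\in C^\infty_c(U)$, $g\in C^\infty_c(\ov U^c)$ follows. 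In the counter-example this sum of subspaces is a proper subspace of $\mcl H$, which is exactly why the naive criterion fails there. Your proposal contains no substitute for this step.

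The second half of your argument --- $K$ is real-analytic on the connected set $\R^d\setminus\{0\}$, vanishes on an open set, hence is supported at the origin, hence the precision symbol $(\xi\cdot\ah\xi)^2(\xi\cdot\Qh\xi)^{-1}$ is a polynomial, and irreducibility of a nondegenerate quadratic form in $d\ge3$ variables forces $\Qh\propto\ah$ --- is correct and is a genuinely different, and arguably cleaner, route than the paper's, which instead computes the kernel $K$ explicitly in \eqref{e.comput.K} and invokes the linear-algebra Lemma~\ref{lem:matrix} to see that it is not identically zero off the diagonal. Your observation about dense open $U$ (where $\partial U=U^c$ and the Markov property is vacuous) is a fair caveat that applies to the paper's statement as well. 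But as written, the proof does not go through without the regularity argument above.
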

Let $\LL:= (-\nabla \cdot \ah \nabla) (-\nabla \cdot \Qh \nabla)^{-1} (-\nabla \cdot \ah \nabla)$. It follows from \eqref{e.def.true.Phi} that for every $f,g \in C^\infty_c$,
\begin{equation}
\label{e.covariance}
\E[\Phi(f) \, \Phi(g)] = \int f \, \LL^{-1} \, g.
\end{equation}
In particular, if $\ah$ and $\Qh$ are proportional, then the field $\Phi$ is a classical GFF, and we recall that in this case the Markov property is well-known. In order to prove the theorem, it thus suffices to show that if $\Phi$ is Markovian with respect to $U$, then $\ah$ and $\Qh$ are proportional.

\medskip

To our knowledge, the rigorous study of the Markov property of random fields was initiated with \cite{mck}, where L\'evy's ``Brownian motion'' indexed by a multidimensional parameter \cite{levy} is shown to be Markovian if and only if the space dimension is odd. 

For random fields on discrete graphs, the Markov property is equivalent to the locality of the ``energy function'' (in the Gaussian case, this is the Dirichlet form, and more generally, we mean the logarithm of the probability density, up to a constant). This equivalence can be checked by a direct computation for Gaussian fields, and we refer to \cite[Theorems~4.1 and 4.2]{kolfried} for a more general statement. It is natural to expect a similar phenomenon in the continuum.  However, a counter-example to this conjecture was given in \cite{kotpre}. We will recall this counter-example in subsection~\ref{ss.counter}. In spite of this, relying on the fact that the field $\Phi$ is ``at least as regular as white noise'', we will be able to justify this conjecture in our context. 

Prior to the counter-example of \cite{kotpre}, incorrect proofs of the general conjecture were published. In \cite{pit1,kuensch}, the arguments are laid down pretending that the field is defined pointwise, and therefore the difficulty caused by the possibly low regularity of the field is missed. The paper \cite{kalman} proceeds more carefully, but is also flawed\footnote{there, the set $\mathfrak{M}(D_-)$ should be closed in order for \cite[Lemma~1]{kalman} to hold; but then the property $F \in \mathfrak{M}(D_-) \implies \supp F \subset D_-$ used in the proof of \cite[Lemma~2]{kalman} is false in general.}. 

The equivalence between locality of $\LL$ and the Markov property was also investigated in the framework of Dirichlet form theory \cite{dyn,roc,kol}. The arguments given there rely on potential theory, and only apply to operators satisfying the maximum principle. This is unfortunately not the case of the operator $\LL$ we consider here.  In order to see this, we can use the general result of \cite{courrege}, which identifies the class of integro-differential operators satisfying the maximum principle. In our case, the operator is moreover invariant under translation. In this setting, the results of \cite{courrege} can be understood as follows: if the operator satisfies the maximum principle, then it is the generator of a L\'evy process. Moreover, this L\'evy process must have the same scale invariance as Brownian motion, in view of the scaling properties of $\LL$. By the L\'evy-Khintchine formula, it follows that such a L\'evy process must be a multiple of Brownian motion. The operator $\LL$ must therefore be of the form $-\nabla \cdot \mathbf{b} \nabla$ for some matrix $\mathbf{b}$, but we show below that this can only happen when $\ah$ and $\Qh$ are proportional.

\medskip

\subsection{Proof of Theorem~\ref{t.non-markov}}
To sum up: using some mild regularity property of the field $\Phi$ defined by \eqref{e.def.true.Phi}, we show that the Markov property implies that $\LL$ is a local operator. We then observe that $\LL$ is not a local operator, unless $\ah$ and $\Qh$ are proportional. As announced in the theorem, we will be sufficiently careful to actually only require the Markov property with respect to one non-trivial open set. 

\medskip

We denote by $\mcl H$ the Hilbert space obtained as the closure of $\{\Phi(f), f \in C^\infty_c\}$ in $L^2(\Omega,\mcl F,\P)$. We denote by $\mcl K$ the Hilbert space obtained by completing $\{\LL^{-1} f, f \in C^\infty_c\}$ with respect to the scalar product 
$$
\la f,g \ra_\LL := \int f \LL g.
$$
The space $\mcl K$ is usually called the reproducing kernel Hilbert space. Note that $f \mapsto \la f, f \ra_\LL^{1/2}$ is within multiplicative constants of the homogeneous $H^1$ norm. 
By \eqref{e.covariance}, the spaces $\mcl H$ and $\mcl K$ are isometric:
\begin{lem}
\label{l.isometry}
The mapping
$$
\Ll\{
\begin{array}{rcl}
\{\Phi(f), f \in C^\infty_c\} & \to & \mcl K \\
\Phi(f) & \mapsto & \LL^{-1} f
\end{array}
\Rr.
$$
is well-defined and extends to an isometry from $\mcl H$ onto $\mcl K$.
\end{lem}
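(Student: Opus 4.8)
The plan is to verify that the proposed map is a linear isometry on dense subspaces and then invoke the standard extension principle. Write $T : \Phi(f) \mapsto \LL^{-1} f$. Since $f \mapsto \Phi(f)$ is linear (as is visible from \eqref{e.def.true.Phi}, $W$ being linear and $f \mapsto \nabla(-\nabla\cdot\ah\nabla)^{-1}f$ being linear) and $f \mapsto \LL^{-1} f$ is linear, $T$ is defined on the linear span $\{\Phi(f) : f \in C^\infty_c\}$, which is dense in $\mcl H$ by definition, and takes values in $\{\LL^{-1} f : f \in C^\infty_c\}$, which is dense in $\mcl K$ by definition. It therefore suffices to show that $T$ preserves the inner product; well-definedness and the isometry property will then both follow at once.

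The key identity is
\begin{equation*}
\la \LL^{-1} f, \LL^{-1} g \ra_\LL = \E[\Phi(f)\,\Phi(g)], \qquad f, g \in C^\infty_c.
\end{equation*}
To see this, note first that $\LL = (-\nabla\cdot\ah\nabla)(-\nabla\cdot\Qh\nabla)^{-1}(-\nabla\cdot\ah\nabla)$ is self-adjoint, being a composition of the self-adjoint operators $-\nabla\cdot\ah\nabla$ and $(-\nabla\cdot\Qh\nabla)^{-1}$ in the symmetric arrangement $\mathsf{A}\mathsf{B}\mathsf{A}$; consequently $\LL^{-1}$ is self-adjoint as well. Then, by the definition of $\la\cdot,\cdot\ra_\LL$,
\begin{equation*}
\la \LL^{-1} f, \LL^{-1} g \ra_\LL = \int (\LL^{-1} f)\, \LL\, (\LL^{-1} g) = \int (\LL^{-1} f)\, g = \int f\, \LL^{-1} g,
\end{equation*}
where the last equality uses the self-adjointness of $\LL^{-1}$; by \eqref{e.covariance} this equals $\E[\Phi(f)\Phi(g)]$. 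In particular, taking $f = g$, we obtain $\E[\Phi(f)^2] = \la \LL^{-1} f, \LL^{-1} f \ra_\LL$, so $T$ is norm-preserving and hence, by polarization, inner-product-preserving on its domain.

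It remains to record the extension. Norm preservation shows that $\Phi(f) = \Phi(g)$ in $L^2(\Omega,\mcl F,\P)$ forces $\E[\Phi(f-g)^2] = 0$, whence $\la \LL^{-1}(f-g), \LL^{-1}(f-g)\ra_\LL = 0$ and thus $\LL^{-1} f = \LL^{-1} g$ in $\mcl K$; this gives well-definedness. A linear isometry defined on a dense subspace of a Hilbert space extends uniquely to an isometry of the completions, yielding an isometry $\mcl H \to \mcl K$. Its range is complete, hence closed, and contains the dense set $\{\LL^{-1} f : f \in C^\infty_c\}$, so it is all of $\mcl K$, and the extension is onto. I expect no serious obstacle here: the argument is the routine reproducing-kernel construction, and the only points requiring a word of care are that $\la\cdot,\cdot\ra_\LL$ is genuinely positive definite (so that $\mcl K$ is a bona fide inner product space, which holds since $\la \LL^{-1} f, \LL^{-1} f\ra_\LL = \E[\Phi(f)^2] \ge 0$, vanishing only when $\Phi(f) = 0$) and the justification of the formal manipulations with $\LL^{-1}$, which rest on the mapping properties of $(-\nabla\cdot\ah\nabla)^{-1}$ already invoked after \eqref{e.def.true.Phi}.
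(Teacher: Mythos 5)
Your proposal is correct and follows the same route the paper takes: the paper gives no separate proof beyond the remark that the identity \eqref{e.covariance} makes $\mcl H$ and $\mcl K$ isometric, which is exactly the inner-product computation $\la \LL^{-1}f,\LL^{-1}g\ra_\LL=\int f\,\LL^{-1}g=\E[\Phi(f)\Phi(g)]$ at the heart of your argument. The density/extension bookkeeping you add is the standard completion argument the authors leave implicit.
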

For every open $U \subset \R^d$, we let $\mcl H(U)$ be the closure of $\{\Phi(f), f \in C^\infty_c(U)\}$ in $\mcl H$, and $\mcl K(U)$ be the closure of $\{\LL^{-1} f, f \in C^\infty_c(U)\}$ in $\mcl K$. The isometry of Lemma~\ref{l.isometry} induces an isometry between $\mcl H(U)$ and $\mcl K(U)$. If $A \subset \R^d$ is a closed set, we define
$$
\mcl H(A) := \bigcap_{\text{open } U \supseteq A} \mcl H(U), \qquad \mcl K(A) := \bigcap_{\text{open } U \supseteq A} \mcl K(U).
$$

The following lemma is classical.

\begin{lem}
\label{l.proj}
Let $U$ be an open set. 

\noindent (1) The set $\mcl H(U)$ is the set of $\mcl F(U)$-measurable elements of $\mcl H$.

\noindent (2) The map
$$
\Ll\{
\begin{array}{rcl}
\mcl H & \to & \mcl H \\
X & \mapsto & \E[X \ | \ \mcl F(U)]
\end{array}
\Rr.
$$
is the orthogonal projection onto $\mcl H(U)$.
\end{lem}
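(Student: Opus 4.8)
The plan is to exploit the fact that $\mcl H$ is a Gaussian Hilbert space, so that on it orthogonality and independence coincide and the conditional expectation $\E[\,\cdot \mid \mcl F(U)]$ agrees with orthogonal projection onto $\mcl H(U)$. Both assertions will follow from a single decomposition: for $X \in \mcl H$, write $X = PX + (X - PX)$, where $P$ denotes the orthogonal projection of $\mcl H$ onto its closed subspace $\mcl H(U)$.

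First I would record the preliminary identity $\mcl F(U) = \sigma(\mcl H(U))$ modulo $\P$-negligible sets. The inclusion $\sigma\{\Phi(f) : f \in C^\infty_c(U)\} \subseteq \sigma(\mcl H(U))$ is immediate, and conversely each element of $\mcl H(U)$, being an $L^2$-limit of linear combinations of variables $\Phi(f)$ with $f \in C^\infty_c(U)$, is an almost-sure limit along a subsequence, hence $\mcl F(U)$-measurable. Since we have enlarged the $\sigma$-algebras to contain all negligible sets, this yields $\sigma(\mcl H(U)) = \mcl F(U)$. In particular every element of $\mcl H(U)$ is $\mcl F(U)$-measurable, which is one inclusion of part (1).

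The heart of the argument is the observation that the residual $X - PX$ is orthogonal to $\mcl H(U)$, and that — because all the random variables involved are jointly centered Gaussian (as $W$ is Gaussian and $\mcl H$ is an $L^2$-closure of Gaussian variables) — orthogonality upgrades to independence: $X - PX$ is independent of the entire family $\{\Phi(f) : f \in C^\infty_c(U)\}$, and therefore of the generated $\sigma$-algebra $\mcl F(U)$. This is the only step I expect to require genuine care, since it rests on the standard but nontrivial fact that a subspace of a Gaussian space orthogonal to a closed subspace generates an independent $\sigma$-algebra; I would justify it by checking independence for finite orthogonal Gaussian families and then passing to the generated $\sigma$-algebras by a monotone-class argument. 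Granting this, $\E[X - PX \mid \mcl F(U)] = \E[X - PX] = 0$, while $PX$ is $\mcl F(U)$-measurable, so
$$
\E[X \mid \mcl F(U)] = PX,
$$
which is exactly assertion (2).

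Finally, part (1) is closed by combining the two ingredients: if $X \in \mcl H$ is $\mcl F(U)$-measurable, then $X = \E[X \mid \mcl F(U)] = PX \in \mcl H(U)$, giving the reverse inclusion. Together with the inclusion noted above, this identifies $\mcl H(U)$ with the set of $\mcl F(U)$-measurable elements of $\mcl H$, completing the proof.
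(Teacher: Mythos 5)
Your proof is correct and rests on exactly the same key fact as the paper's: in the Gaussian space $\mcl H$, orthogonality to $\mcl H(U)$ upgrades to independence of $\mcl F(U)$, which combined with the a.s.-measurability of $L^2$-limits identifies conditional expectation with orthogonal projection. The only cosmetic difference is that you deduce (1) from (2) via the decomposition $X = PX + (X-PX)$, whereas the paper establishes (1) first by comparing orthogonal complements and then reads off (2).
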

\begin{proof}
Every element of $\mcl H(U)$ is $\mcl F(U)$-measurable. Conversely, if $X \in \mcl H(U) ^\perp$, then for every $f_1, \ldots, f_k \in C^\infty_c(U)$, we have $\E[X \, \Phi(f_i)] = 0$. Since $\mcl H$ is a Gaussian space, this implies that $X$ is independent of $(\Phi(f_1),\ldots,\Phi(f_k))$, and thus $X$ is independent of $\mcl F(U)$. We have shown
$$
\mcl H(U)^\perp \subset \{Y \in \mcl H \ : \ Y \mbox{ is $\mcl F(U)$-measurable}\}^\perp.
$$
Since both spaces are closed, this completes the proof of (1). Part (2) follows at once, since for every $X \in \mcl H$, we have $\Ll(X - \E[X \ | \ \mcl F(U)] \Rr) \in \mcl H(U)^\perp$.
\end{proof}


\begin{lem}
\label{l.decompH}
Let $\mcl H_0(\ov U)$ denote the orthogonal complement of $\mcl H(\dr U)$ in $\mcl H(\ov U)$. If $\Phi$ is Markovian with respect to the open set $U$, then 
$$
\mcl H = \mcl H_0(\ov U) \stackrel{\perp}{\oplus} \mcl H(\dr U) \stackrel{\perp}{\oplus} \mcl H_0(U^c).
$$
\end{lem}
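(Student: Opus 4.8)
The plan is to combine two ingredients: the Markov hypothesis, which will give the orthogonality $\mcl H_0(\ov U)\perp\mcl H_0(U^c)$, and a spanning statement $\mcl H(\ov U)+\mcl H(U^c)=\mcl H$ that holds irrespective of the Markov property and encodes the regularity of $\Phi$. Throughout, write $\mcl H_0(U^c)$ for the orthogonal complement of $\mcl H(\dr U)$ in $\mcl H(U^c)$; since $\dr U\subseteq U^c$ and $\dr U\subseteq\ov U$ are inclusions of closed sets, monotonicity of $A\mapsto\mcl H(A)$ gives $\mcl H(\dr U)\subseteq\mcl H(\ov U)\cap\mcl H(U^c)$, so both $\mcl H_0(\ov U)$ and $\mcl H_0(U^c)$ are well defined and, by construction, $\mcl H(\ov U)=\mcl H_0(\ov U)\stackrel{\perp}{\oplus}\mcl H(\dr U)$ and $\mcl H(U^c)=\mcl H(\dr U)\stackrel{\perp}{\oplus}\mcl H_0(U^c)$.

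First I would translate the Markov property into Hilbert-space language. By Lemma~\ref{l.proj} and its analogue for the closed sets $\ov U$, $U^c$, $\dr U$ (obtained routinely by reverse-martingale convergence as one passes to the limit over the open neighbourhoods entering the definitions of $\mcl F(\cdot)$ and $\mcl H(\cdot)$, using that projections onto decreasing closed subspaces converge to the projection onto their intersection), conditional expectation given $\mcl F(A)$ coincides with orthogonal projection onto $\mcl H(A)$. Letting $P$ denote the projection onto $\mcl H(\dr U)$ and $\la\cdot,\cdot\ra$ the inner product of $\mcl H$, the standard characterization of conditional independence in a Gaussian space shows that the Markov property is equivalent to $\la X,Y\ra=\la PX,PY\ra$ for all $X\in\mcl H(\ov U)$ and $Y\in\mcl H(U^c)$. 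Since $X-PX\in\mcl H_0(\ov U)$ and $Y-PY\in\mcl H_0(U^c)$ are both orthogonal to $\mcl H(\dr U)$, the cross terms in $\la X,Y\ra=\la PX,PY\ra+\la X-PX,Y-PY\ra$ vanish, so this identity reduces to $\la X-PX,Y-PY\ra=0$; as $X,Y$ range over $\mcl H(\ov U),\mcl H(U^c)$ this says precisely $\mcl H_0(\ov U)\perp\mcl H_0(U^c)$. The three subspaces $\mcl H_0(\ov U)$, $\mcl H(\dr U)$, $\mcl H_0(U^c)$ are then mutually orthogonal (the first two pairs by definition, the last by the Markov property); being closed and mutually orthogonal, their sum is closed and equals $\mcl H(\ov U)+\mcl H(U^c)$.

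It then remains to prove the spanning statement $\mcl H(\ov U)+\mcl H(U^c)=\mcl H$. Since $\mcl H(U)\subseteq\mcl H(\ov U)$ and $\mcl H(\mathrm{int}\,U^c)\subseteq\mcl H(U^c)$, and since any $f\in C^\infty_c$ with $\supp f\cap\dr U=\emptyset$ is supported in the open set $U\cup\mathrm{int}\,U^c$ and hence splits, via a partition of unity, into a part supported in $U$ and a part supported in $\mathrm{int}\,U^c$, the sum $\mcl H(\ov U)+\mcl H(U^c)$ contains $\Phi(f)$ for every such $f$. Transporting this to the reproducing kernel space through the isometry of Lemma~\ref{l.isometry} and using \eqref{e.covariance} in the form $\la h,\LL^{-1}f\ra_\LL=\int h f$, the density of these elements is equivalent to the assertion that no nonzero $h\in\mcl K$ is supported in $\dr U$. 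I would conclude by identifying $\mcl K$ with the homogeneous Sobolev space $\dot H^1(\R^d)$ (the norms being comparable, as noted after Lemma~\ref{l.isometry}): for $d\ge3$ its elements are genuine, quasi-continuous functions, and $\dr U$, as the boundary of an open set, has empty interior, so its complement is a dense open set on which $h$ vanishes; a quasi-continuous function vanishing on a dense open set is zero, forcing $h=0$. Combining the two parts gives $\mcl H=\mcl H(\ov U)+\mcl H(U^c)=\mcl H_0(\ov U)\stackrel{\perp}{\oplus}\mcl H(\dr U)\stackrel{\perp}{\oplus}\mcl H_0(U^c)$.

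The main obstacle is this spanning step. The non-locality of $\LL$ (equivalently of $(-\nabla\cdot\ah\nabla)^{-1}$) prevents one from decomposing a general $\Phi(f)$ by cutting $f$ across $\dr U$, so the argument must instead route through the reproducing kernel space and rule out any ``hidden mass'' of $\mcl K$ on the boundary $\dr U$, which may a priori carry positive Lebesgue measure. This is exactly the point where the $\dot H^1$-regularity of $\Phi$, one degree smoother than white noise, is indispensable.
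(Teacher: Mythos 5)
Your first half --- translating the Markov hypothesis into the orthogonality $\mcl H_0(\ov U)\perp\mcl H_0(U^c)$ via conditional expectations and concluding that $\mcl H_0(\ov U)\oplus\mcl H(\dr U)\oplus\mcl H_0(U^c)$ is a closed subspace equal to $\mcl H(\ov U)+\mcl H(U^c)$ --- is correct and is essentially Steps 1 and 2 of the paper's proof. You also correctly identify the spanning statement $\mcl H(\ov U)+\mcl H(U^c)=\mcl H$ as the crux. But your argument for it has a genuine gap: the density in $\mcl H$ of $\{\Phi(f):\supp f\cap\dr U=\emptyset\}$, which you reduce to the claim that no nonzero $h\in\mcl K\cong\dot H^1(\R^d)$ can vanish a.e.\ on $(\dr U)^c$, is \emph{false} for general open $U$. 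The step ``a quasi-continuous function vanishing on a dense open set is zero'' does not hold: quasi-continuity only controls the function off sets of small capacity, while $\dr U$ can be a nowhere dense closed set of \emph{large} capacity and positive measure. Concretely, take $\{x_n\}$ dense in $(0,1)^d$, radii $r_n$ with $\sum_n r_n^{(d-2)/2}<\eps$, and $U=\bigcup_n B(x_n,r_n)$, so that $\dr U=[0,1]^d\setminus U$ is nowhere dense with measure close to $1$. Let $v\in C^\infty_c((0,1)^d)$ equal $1$ on $[\tfrac14,\tfrac34]^d$ and let $w=\sum_n w_n$ with $w_n$ the equilibrium potential of $B(x_n,r_n)$, so $w\ge 1$ on $U$ and $\|\nabla w\|_{L^2}\lesssim\eps$. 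Then $h:=(v-w)^+\in\dot H^1$ vanishes a.e.\ on $U\cup([0,1]^d)^c=(\dr U)^c$, yet $h\ge\tfrac12$ on a set of positive measure once $\eps$ is small (by the Sobolev inequality, $|\{w>\tfrac12\}|$ is small). For such a $U$ your argument only yields that $\mcl H(\ov U)+\mcl H(U^c)$ contains a proper closed subspace of $\mcl H$, and the proof does not close.

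The paper's Step 3 circumvents this entirely: rather than approximating $f$ by test functions avoiding $\dr U$, it splits $f$ itself as $f\chi_n+f(1-\chi_n)$ with $\chi_n(x)=\chi(n\,\mathsf{dist}(x,U))$, so that $\Phi(f(1-\chi_n))\in\mcl H(U^c)$ for every $n$, and then shows that $\Phi(f\chi_n)$ is Cauchy in $\mcl H$ using the bound $\int g\,\LL^{-1}g\le C\|g\|_{L^2}^2$ for compactly supported $g$ (this is the precise sense in which ``$\Phi$ is at least as regular as white noise''). Since $f\chi_n\to f\1_{\ov U}$ in $L^2$ and $\supp(f\chi_n)$ shrinks to $\supp f\cap\ov U$, the limit of $\Phi(f\chi_n)$ lies in $\mcl H(\ov U)$; the boundary mass is thereby assigned to $\mcl H(\ov U)$ instead of being argued away. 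If you want to salvage your route, you would have to show that the orthogonal complement in $\mcl K$ of $\{\LL^{-1}f:\supp f\cap\dr U=\emptyset\}$ is contained in $\mcl K(\ov U)+\mcl K(U^c)$, which is not clear because $\LL$ is non-local; the cutoff argument is the cleaner path.
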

\begin{proof}
We decompose the proof into three steps. 

\medskip

\noindent \emph{Step 1.}
We first show that 
\begin{equation}
\label{e.orthogonal-proj}
\mbox{ the orthogonal projection of $\mcl H(U^c)$ onto $\mcl H(\ov U)$ is $\mcl H(\dr U)$}.
\end{equation}

By Lemma~\ref{l.proj}, it suffices to show that for every $\mcl F(U^c)$-measurable $X \in \mcl H$,
$$
\E[X \, | \, \mcl F(\dr U)]  = \E[X \, | \, \mcl F(\ov U)].
$$
We show that
\begin{equation}
\label{e.measurability}
\E \Ll[ \Ll(\E[X \, | \, \mcl F(\dr U)] - \E[X \, | \, \mcl F(\ov U)]\Rr)^2 \Rr] = 0.
\end{equation}
By the Markov property and the inclusion $\mcl F(\dr U) \subset \mcl F(\ov U)$,
\begin{equation}
\label{e.comp1}
\E\Ll[ X \ \E[X \, | \, \mcl F(\ov U)] \, \big | \, \mcl F(\dr U)\Rr] = \Ll(\E[X \, | \, \mcl F(\dr U)]\Rr)^2.
\end{equation}
In particular,
\begin{eqnarray*}
\E \Ll[ \Ll(\E[X \, | \, \mcl F(\ov U)] \Rr)^2 \Rr] & = & \E \Ll[ X \ \E[X \, | \, \mcl F(\ov U)]  \Rr] \\
& \stackrel{\eqref{e.comp1}}{=}&  \E \Ll[ \Ll(\E[X \, | \, \mcl F(\dr U)] \Rr)^2 \Rr] 
,
\end{eqnarray*}
and this proves \eqref{e.measurability}.

\medskip

\noindent \emph{Step 2.} 
We show that
$$
\mcl H_0(\ov U) \stackrel{\perp}{\oplus} \mcl H(\dr U) \stackrel{\perp}{\oplus} \mcl H_0(U^c) = \mcl H(\ov U) + \mcl H(U^c).
$$
The equality
$$
\mcl H_0(\ov U) + \mcl H(\dr U) + \mcl H_0(U^c) = \mcl H(\ov U) + \mcl H(U^c)
$$
is clear. The orthogonality of the sets on the left-hand side follows from the definition of $\mcl H_0$ and the previous step. In particular, the set $\mcl H(\ov U) + \mcl H(U^c)$ is closed.

\medskip

\noindent \emph{Step 3.} 
Let $f \in C^\infty_c$. In order to complete the proof, it suffices to see that 
\begin{equation}
\label{e.to-show-reg}
\Phi(f) \in \mcl H(\ov U) + \mcl H(U^c).
\end{equation} 
This is the step of the proof where we need to use the fact that $\Phi$ is ``at least as regular as white noise''. In more precise words, we show that we can control the quadratic form $g \mapsto \int g \LL^{-1} g$ over local functions $g$ by the $L^2$ norm of $g$ squared, and that this is sufficient to conclude. Without loss of generality, we assume that the support of $f$ is contained in the unit ball $B(0,1)$. Let $\chi : \R \to [0,1]$ be a $C^\infty$ function such that $\chi = 1$ on $(-\infty,0]$ and $\chi = 0$ on $[1,+\infty)$, and let
$$
\chi_n := 
\Ll\{
\begin{array}{rcl}
\R^d & \to & [0,1] \\
x & \mapsto & \chi(n \, \mathsf{dist}(x,U)),
\end{array}
\Rr.
$$
where $\mathsf{dist}(x,U)$ denotes the distance between $x$ and the open set $U$.
Note that
$$
\Phi(f) = \Phi(f \chi_n) + \Phi(f[1-\chi_n]),
$$
and that $\Phi(f[1-\chi_n]) \in \mcl H(U^c)$. We now argue that $\Phi(f \chi_n)$ converges to an element of $\mcl H(\ov U)$ as $n$ tends to infinity. This would imply that $\Phi(f[1-\chi_n])$ converges to an element of $\mcl H(U^c)$ and therefore complete the proof of \eqref{e.to-show-reg}. If the sequence $(\Phi(f\chi_n))$ converges in $\mcl H$, then the limit is necessarily in $\mcl H(\ov U)$; so what needs to be argued is simply the convergence of $\Phi(f[1-\chi_n])$ in $\mcl H$. 
By \eqref{e.covariance}, for every $g \in C^\infty_c$,
$$
\E[\Phi(g)^2] = \int g \LL^{-1} g,
$$
so we need to check that $f \chi_n$ is a Cauchy sequence with respect to the seminorm
$$
g \mapsto \Ll(\int g \LL^{-1} g \Rr)^{\frac 1 2}.
$$
Letting $\Lambda$ be the largest eigenvalue of $\Qh$ and $h := (-\nabla \cdot \a \nabla)^{-1}g$, we have
$$
\int g \LL^{-1} g  \le \Lambda \int |\nabla h|^2 
.
$$
Since $d\geq 3$, using the Green function representation, we obtain that there exists a constant $C < \infty$ (depending only on $\a$) such that
$$
|\nabla h|(x)  \le C \int_{\R^d} \frac 1 {|x-y|^{d-1}} |g|(y) \, \d y.
$$
By Young's convolution inequality, the $L^2$ norm of the function
$$
x \mapsto \int_{\R^d} \frac {\1_{|x-y| \le 1}} {|x-y|^{d-1}} |g|(y) \, \d y
$$
is bounded by a constant times $\|g\|_{L^2}$. For functions $g$ with support in $B(0,1)$, we also have
$$
\int_{\R^d} \frac {\1_{|x-y| > 1}} {|x-y|^{d-1}} |g|(y) \, \d y \le C \frac{\|g\|_{L^1}}{1+|x|^{d-1}} \le C \frac{\|g\|_{L^2}}{1+|x|^{d-1}}.
$$

To sum up, we have shown that there exists a constant $C < \infty$ such that for every $g \in C^\infty_c$ with support in $B(0,1)$, 
$$
\Ll( \int g \LL^{-1} g \Rr)^{\frac 1 2} \le C \|g\|_{L^2}.
$$
Since $f\chi_n \to f$ in $L^2$, this completes the proof.
%
%
%
%
%
\end{proof}

\begin{lem}
\label{l.local}
If $\Phi$ is Markovian with respect to the open set $U$, and if $f_1 \in C^\infty_c(U)$ and $f_2 \in C^\infty_c(\ov U^c)$, then $\la f_1,f_2 \ra_\LL = 0$.
\end{lem}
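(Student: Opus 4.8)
\emph{Proof proposal.} The plan is to carry everything into the reproducing kernel Hilbert space $\mcl K$ and to read the desired identity $\la f_1,f_2\ra_\LL = 0$ as an orthogonality relation there. First I would note that, since $f \mapsto \la f,f\ra_\LL^{1/2}$ is within constants of the homogeneous $H^1$ norm, the functions $f_1$ and $f_2$ themselves (not only $\LL^{-1}f_1$ and $\LL^{-1}f_2$) are elements of $\mcl K$, and $\la f_1,f_2\ra_\LL$ is precisely their scalar product in $\mcl K$. The basic tool is the reproducing identity: for every $v\in\mcl K$ and every $g\in C^\infty_c$ one has $\la v,\LL^{-1}g\ra_\LL = \int v\,g$ (immediate for smooth $v$ by self-adjointness of $\LL$, then extended to all of $\mcl K$ by density). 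In particular, if $w\in\mcl K$ is a function vanishing on an open set $V$, then $w\perp\mcl K(V)$.

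Using this, I would record two elementary orthogonality facts that do \emph{not} use the Markov property. Since $\supp f_1$ is a compact subset of $U$ while $U^c$ is closed and disjoint from it, there is an open $V\supseteq U^c$ with $V\cap\supp f_1=\emptyset$; because $\mcl K(U^c)\subseteq\mcl K(V)$ and $f_1$ vanishes on $V$, this yields $f_1\perp\mcl K(U^c)$, and in particular $f_1\perp\mcl K_0(U^c)$. Symmetrically, $\supp f_2$ is a compact subset of $\ov U^c=(\ov U)^c$ and $\ov U$ is closed and disjoint from it, so there is an open $V\supseteq\ov U$ with $V\cap\supp f_2=\emptyset$, whence $f_2\perp\mcl K(\ov U)$.

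The Markov property enters only to upgrade the first of these facts. Through the isometry of Lemma~\ref{l.isometry}, which matches $\mcl H(U)$ with $\mcl K(U)$ and $\mcl H(A)$ with $\mcl K(A)$ for closed $A$, the decomposition of Lemma~\ref{l.decompH} transports to $\mcl K = \mcl K_0(\ov U)\stackrel{\perp}{\oplus}\mcl K(\dr U)\stackrel{\perp}{\oplus}\mcl K_0(U^c)$. Grouping the first two summands gives $\mcl K(\ov U)^\perp=\mcl K_0(U^c)$. Since $f_1\perp\mcl K_0(U^c)$, I conclude $f_1\in\mcl K(\ov U)$. Combining this with $f_2\perp\mcl K(\ov U)$ from the previous step yields $\la f_1,f_2\ra_\LL=\la f_1,f_2\ra_{\mcl K}=0$, as desired.

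The main obstacle — and the only place where the hypothesis is genuinely used — is the step $f_1\in\mcl K(\ov U)$. Without the Markov property one only knows $f_1\perp\mcl K(U^c)$, and there is no reason for $\mcl K(\ov U)^\perp$ to sit inside $\mcl K(U^c)$: the sum $\mcl K(\ov U)+\mcl K(U^c)$ need not exhaust $\mcl K$. This is exactly the nonlocality of $\LL$, so it is reassuring that the conclusion $\la f_1,f_2\ra_\LL=0$ (a locality statement, as $\supp f_1$ and $\supp f_2$ are disjoint) can be reached only after invoking the Markov decomposition. The remaining care goes into justifying the reproducing identity and the identification of $C^\infty_c$ inside the abstract completion $\mcl K$, but these are routine once the $H^1$-equivalence of $\la\cdot,\cdot\ra_\LL^{1/2}$ is used.
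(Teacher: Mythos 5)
Your proposal is correct and follows essentially the same route as the paper's proof: both transport the Markov decomposition of Lemma~\ref{l.decompH} to $\mcl K$ via the isometry of Lemma~\ref{l.isometry} to get $\mcl K(U^c)^\perp \subset \mcl K(\ov U)$, and both use the reproducing identity $\la f, \LL^{-1}g\ra_\LL = \int f g$ for disjointly supported test functions to place $f_1$ in $\mcl K(U^c)^\perp$ and $f_2$ in $\mcl K(\ov U)^\perp$. The only cosmetic difference is that you phrase the key inclusion as $\mcl K(\ov U)^\perp = \mcl K_0(U^c)$ while the paper states it as $\mcl K(U^c)^\perp \subset \mcl K(\ov U)$; these are equivalent given the three-way orthogonal decomposition.
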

\begin{proof}
The isometry of Lemma~\ref{l.isometry} transports the decomposition of $\mcl H$ in Lemma~\ref{l.decompH} into a decomposition of $\mcl K$. In particular, 
$$
\mcl K(U^c)^\perp \subset \mcl K(\ov U).
$$
Let $f_1 \in C^\infty_c(U)$, and let $S := \supp f_1$. Note that
$$
\bigcap_{\substack{\text{open } O \supseteq U^c \\ O \cap S = \emptyset}} \mcl K(O) = \mcl K(U^c).
$$ 
Let $g \in C^\infty_c$ with support disjoint from $S$. We have
$$
0 = \int f_1 \, g = \la f_1, \LL^{-1} g \ra_\LL.
$$
By density, we deduce that $f_1 \in \mcl K(U^c)^\perp \subset \mcl K(\ov U)$. By symmetry, we also have $f_2 \in \mcl K(\ov U)^\perp$, which completes the proof.
\end{proof}

\begin{lem}
\label{l.non-loc}
Let $U$ be an open subset of $\R^d$, $U \notin \{\emptyset, \R^d\}$. If $\ah$ and $\Qh$ are not proportional, then there exist $f_1 \in C^\infty_c(U), f_2 \in C^\infty_c(\ov U^c)$ such that $\la f_1, f_2 \ra_\LL \neq 0$. 
\end{lem}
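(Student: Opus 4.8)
The plan is to show that $\LL$ fails to be a local (differential) operator as soon as $\ah$ and $\Qh$ are not proportional, and that this non-locality can be detected by test functions with the required disjoint supports. Set $P := -\nabla\cdot\ah\nabla$, so that $\LL = P(-\nabla\cdot\Qh\nabla)^{-1}P$ is translation invariant with Fourier symbol
\[
\widehat{\LL}(\xi) = \frac{(\xi\cdot\ah\xi)^2}{\xi\cdot\Qh\xi}, \qquad \xi\in\R^d\setminus\{0\}.
\]
Let $G$ be the Green function of $-\nabla\cdot\Qh\nabla$; since $d\ge 3$, it is given explicitly by $G(z) = c_d(\det\Qh)^{-1/2}(z\cdot\Qh^{-1}z)^{-(d-2)/2}$, which is real-analytic on $\R^d\setminus\{0\}$. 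Consequently the convolution kernel of $\LL$ is $k := \mathcal F^{-1}\widehat{\LL} = P^2 G = (-\nabla\cdot\ah\nabla)^2 G$, which is homogeneous of degree $-(d+2)$, even, and real-analytic on $\R^d\setminus\{0\}$.

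First I would argue that $k$ does not vanish identically on $\R^d\setminus\{0\}$. Indeed, if it did, then $k$ would be a distribution supported at the origin, hence a finite combination of derivatives of $\delta$, forcing $\widehat{\LL}$ to be a polynomial. Being homogeneous of degree $2$, this polynomial would be a quadratic form $\xi\mapsto\xi\cdot\b\xi$, yielding the identity $(\xi\cdot\ah\xi)^2 = (\xi\cdot\b\xi)(\xi\cdot\Qh\xi)$ in $\R[\xi]$. Because $\Qh\in\mathsf{Sym}_d^+$ with $d\ge 3$, the form $\xi\cdot\Qh\xi$ has rank $d\ge 3$ and is therefore irreducible over $\mathbb{C}$ (a reducible quadratic form has rank at most $2$). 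As an irreducible, hence prime, element of the ring $\mathbb{C}[\xi]$ dividing $(\xi\cdot\ah\xi)^2$, it must divide $\xi\cdot\ah\xi$; comparing degrees gives $\ah = \lambda\Qh$ for some $\lambda>0$, contradicting non-proportionality. Thus $k\not\equiv 0$, and by real-analyticity its zero set is a proper analytic subvariety of $\R^d\setminus\{0\}$, so $\{k\neq 0\}$ is open and dense there.

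Finally I would convert this into the desired pairing. Fix $p\in U$ and $q\in(\ov U)^c$ (both sets nonempty: $U\neq\emptyset$ by hypothesis, while $(\ov U)^c\neq\emptyset$ is needed for a nonzero admissible $f_2$ to exist at all). As $p,q$ vary, the difference $p-q$ ranges over the nonempty open set $D := \{p-q : p\in U,\ q\in(\ov U)^c\}$, which avoids the origin since $U$ and $(\ov U)^c$ are disjoint. By density of $\{k\neq 0\}$ there is $v_0 = p_0 - q_0\in D$ with $k(v_0)\neq 0$. I would then take nonnegative mollifiers $f_1,f_2\in C^\infty_c$ with $\int f_1 = \int f_2 = 1$ and supports in small balls $B(p_0,\eps)\subset U$ and $B(q_0,\eps)\subset(\ov U)^c$. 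These supports are disjoint and at positive distance, so the singular part of $k$ at the origin does not contribute, and
\[
\la f_1,f_2\ra_\LL = \int f_1\,\LL f_2 = \int\!\!\int f_1(x)\,k(x-y)\,f_2(y)\,\d x\,\d y \longrightarrow k(v_0)\neq 0 \quad (\eps\to 0),
\]
so $\la f_1,f_2\ra_\LL\neq 0$ for $\eps$ small enough.

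The main obstacle is precisely this last matching step. Non-proportionality only guarantees that $k$ is nonzero in \emph{some} direction, while $D$ is an essentially arbitrary nonempty open set dictated by the geometry of $U$; a priori $D$ could miss a prescribed cone. It is the real-analyticity of $k$ away from the origin—inherited from the explicit Green function $G$—that makes $\{k\neq 0\}$ dense and therefore forces it to meet $D$, turning the qualitative non-locality of $\LL$ into an actual nonzero pairing with the required support constraints.
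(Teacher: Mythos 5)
Your proof is correct, and it follows the same overall skeleton as the paper's: represent $\la f_1,f_2\ra_\LL$ off the diagonal via the convolution kernel of $\LL$, show this kernel is not identically zero on $\R^d\setminus\{0\}$, and then localize test functions near a pair of points whose difference lies in the non-vanishing set. Where you genuinely diverge is in how the non-vanishing is established. The paper first changes variables $\xi\mapsto\Qh^{-1/2}\xi$ to reduce to $\A=\Qh^{-1/2}\ah\Qh^{-1/2}$ and the ordinary Laplacian, computes the kernel $K=\nabla\cdot\A\nabla(\nabla\cdot\A\nabla\G)$ explicitly as in \eqref{e.comput.K}, and then invokes the linear-algebra Lemma~\ref{lem:matrix} of Appendix~\ref{ap.matrix} (proved by induction on the dimension) to conclude that $|x|^{d+6}K(x)$ is a non-zero polynomial. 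You instead argue by Fourier--support duality: if the kernel vanished off the origin it would be supported at $\{0\}$, the symbol $(\xi\cdot\ah\xi)^2/(\xi\cdot\Qh\xi)$ would then be a homogeneous degree-$2$ polynomial, and the irreducibility (hence primality) of a rank-$\ge 3$ quadratic form in $\mathbb{C}[\xi]$ forces $\ah\propto\Qh$. This replaces both the explicit kernel computation and the appendix lemma by a short algebraic argument, and in fact proves the more general implication $(\xi\cdot\ah\xi)^2=(\xi\cdot\b\xi)(\xi\cdot\Qh\xi)\Rightarrow\ah\propto\Qh$ without first normalizing $\Qh$ to the identity. Your closing localization step---real-analyticity of the kernel making $\{k\neq 0\}$ dense, hence meeting the open set $U-(\ov U)^c$---is also spelled out more carefully than the paper's final sentence, though it accomplishes the same thing (a non-zero polynomial is likewise non-zero on a dense open set). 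One shared caveat: both arguments implicitly require $(\ov U)^c\neq\emptyset$, which does not follow from $U\neq\R^d$ alone (consider $U=\R^d\setminus\{0\}$); this is a defect of the lemma's statement rather than of either proof.
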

\begin{proof}

In the Fourier domain, $\la f_1,f_2\ra_\LL$ can be written explicitly as 
\[
\begin{aligned}
\la f_1,f_2\ra_\LL=&\int_{\R^d}f_1(x)(-\nabla \cdot \ah \nabla) (-\nabla \cdot \Qh \nabla)^{-1} (-\nabla \cdot \ah \nabla)f_2(x)dx\\
=&\frac{1}{(2\pi)^d}\int_{\R^d} \overline{\hat{f}_1(\xi)} \frac{(\xi\cdot  \ah\xi)^2}{\xi\cdot \Qh\xi}\hat{f}_2(\xi)d\xi.
\end{aligned}
\]
We change $\xi$ to $\Qh^{-\frac12} \xi$, and then go back to physical domain, to obtain that
\[
\begin{aligned}
&\la f_1,f_2\ra_\LL\\
=&|\Qh|^\frac12\int_{\R^d}(-\nabla \cdot  \Qh^{-\frac12}\ah\Qh^{-\frac12}\nabla) f_1(Q^\frac12 x)(-\Delta)^{-1}(-\nabla \cdot  \Qh^{-\frac12}\ah\Qh^{-\frac12}\nabla)f_2(Q^\frac12x)dx.
\end{aligned}
\] 
Since $\ah$ and $\Qh$ are not proportional, $
\Qh^{-\frac12}\ah\Qh^{-\frac12}$ is not a multiple of identity. By denoting $\A=\Qh^{-\frac12}\ah\Qh^{-\frac12}$, we only need to show that there exists $g_1,g_2$ whose supports are in $Q^\frac12 U$ and $Q^\frac12\ov U^c$ respectively and such that
\[
\int_{\R^d}(-\nabla \cdot \A\nabla) g_1(x)(-\Delta)^{-1}(-\nabla \cdot  \A\nabla)g_2(x)dx\neq 0.
\]

For $x$ away from the support of $g_2$, we can write
\[
\begin{aligned}
(-\Delta)^{-1}(-\nabla \cdot  \A\nabla)g_2(x)=& \int_{\R^d}\G(x-y)(-\nabla \cdot  \A\nabla g_2)(y)dy\\
=&\int_{\R^d} (-\nabla \cdot  \A\nabla \G)(x-y) g_2(y)dy
\end{aligned}
\]
with $\G$ the Green function of $-\Delta$. Since $g_1,g_2$ have disjoint supports, we can further integrate by parts to obtain
\[
\begin{aligned}
&\int_{\R^d}(-\nabla \cdot \A\nabla) g_1(x)(-\Delta)^{-1}(-\nabla \cdot  \A\nabla)g_2(x)dx\\
=&\int_{\R^{2d}}g_1(x)K(x-y)g_2(y)dydx
\end{aligned}
\]
with $K=\nabla\cdot \A\nabla(\nabla \cdot \A\nabla \G)$. By an explicit calculation, we have, for $x \neq 0$,
 \begin{equation}
 \label{e.comput.K}
\begin{aligned}
K(x)/c=&\frac{1}{|x|^{d+2}}[-d\tr(\A)^2-2d\tr(\A^2)]+\frac{x^t\A x \tr(\A)}{|x|^{d+4}}2d(d+2)\\
&+\frac{x^t\A^2x}{|x|^{d+4}}4d(d+2)-\frac{(x^t\A x)^2}{|x|^{d+6}}d(d+2)(d+4)
\end{aligned}
\end{equation}
for some constant $c\neq 0$. We show in Appendix~\ref{ap.matrix} that if $\A$ is not a multiple of the identity, then $(x^t\A x)^2/|x|^2$ cannot be a quadratic form. As a consequence, the function $x \mapsto |x|^{d+6} K(x)$ is a non-zero polynomial over $\R^d \setminus \{0\}$. 
This implies that we can find $g_1,g_2$ whose supports are in $Q^\frac12 U$ and $Q^\frac12\ov U^c$ respectively and satisfy 
\[
\int_{\R^{2d}}g_1(x)K(x-y)g_2(y)dydx\neq 0.
\]
The proof is complete.
\end{proof}
\begin{proof}[Proof of Theorem~\ref{t.non-markov}]
The result follows from Lemmas~\ref{l.local} and \ref{l.non-loc}.
\end{proof}

\subsection{A counter-example with less regularity}
\label{ss.counter}
For the reader's convenience, we briefly recall the counter-example given in \cite{kotpre}. Let $X$ be the random distribution over $\R$ such that for every $f \in C^\infty_c$, $X(f)$ is a centered Gaussian with variance $\int_{\R} \Ll[f^2 + (f')^2\Rr]$, where $f'$ is the derivative of $f$. We let
$\LL^{-1} f := f - f''$,
so that $\LL$ is a convolution operator with kernel $\frac 1 2 \exp(-|x-y|)$, and hence is not local. 

\smallskip

The set of admissible test functions for $X$ can be extended to every element of the Sobolev space $W^{1,2}$ of functions in $L^2$ with weak derivative in $L^2$. 
We recall that by Morrey's inequality, elements of $W^{1,2}$ are continuous functions. For every open or closed $A \subset \R$, we write 
$$
W^{1,2}(A) := \Ll\{f \in W^{1,2} \ : \ f = 0 \text{ on } A^c\Rr\}.
$$
Note that if $A$ is closed, then
$$
W^{1,2}(A) = \bigcap_{\text{open } U \supseteq A} W^{1,2}(U).
$$
The space $W^{1,2}$ has a Hilbert space structure. We write $W_\msf{perp}^{1,2}(A)$ for the orthogonal complement of $W^{1,2}(A)$ in $W^{1,2}$. 
For every $V \subset W^{1,2}$, we write
$$
X(V) := \{X(f) \ : \ f \in V\},
$$ 
and check that for every open or closed $A \subset \R^d$,
$$
\mcl H(A) = X\Ll(W^{1,2}(A)\Rr).
$$
For $A$ open or closed, denote by $\mcl F_H(A)$ the $\sigma$-algebra generated by the random variables in $\mcl H(A)$. We claim that for every such $A$, we have $\mcl F(A) = \mcl F_H(A)$. The identity is clear if $A$ is open. For $A$ closed, we clearly have $\mcl F_H(A) \subset \mcl F(A)$. Conversely, observe that 
$$
W^{1,2}(A) + \bigcup_{\text{open } U \supseteq A} W^{1,2}_\msf{perp}(U)
$$
is dense in $W^{1,2}$ (the closure of the union above is $W^{1,2}_\msf{perp}(A)$). Hence, by the martingale convergence theorem, if $Z$ is a bounded $\mcl F$-measurable random variable, then 
\begin{equation}
\label{e.lim.Z}
Z = \lim_{\text{open } U \downarrow A} \E\Ll[Z \, | \, X \Ll(W^{1,2}(A) + W^{1,2}_\msf{perp}(U)\Rr)\Rr].
\end{equation}
If $Z$ is $\mcl F(A)$-measurable, then it is independent of $X(W^{1,2}_\msf{perp}(U))$. By \eqref{e.lim.Z}, it is therefore $\mcl F_H(A)$-measurable. 

Let $f \in W^{1,2}((-\infty,0])$ and $g \in W^{1,2}([0,+\infty))$. We have
\begin{align*}
\E[X(f) X(g)] & = \int_\R \Ll[ fg + f' g' \Rr] ,
\end{align*}
where $f', g' \in L^2(\R)$ are the weak derivatives of $f,g \in L^2(\R)$. The functions $f$ and $f'$ are supported in $(-\infty,0]$, while $g$ and $g'$ are supported in $[0,+\infty)$, so $\E[X(f) X(g)] = 0$. Since the spaces $\mcl H((-\infty,0])$ and $\mcl H([0,+\infty))$ are Gaussian, we infer that $\mcl F((-\infty,0])$ and $\mcl F([0,+\infty))$ are independent. Moreover, $\mcl F(\{0\})$ is trivial, and therefore the field $X$ is Markovian with respect to $(-\infty,0)$.

\smallskip

Note that the field $X$ has the regularity of the derivative of white noise. The proof of Step 3 of Lemma~\ref{l.decompH} breaks down, and 
$$
\mcl H((-\infty,0]) + \mcl H([0,+\infty)) \neq \mcl H,
$$
since $W^{1,2}((-\infty,0]) + W^{1,2}([0,+\infty))$ only contains functions that vanish at the origin, and therefore
$$
W^{1,2}((-\infty,0]) + W^{1,2}([0,+\infty)) \neq W^{1,2}.
$$ 
\section{Homogenization and expansion of effective fluctuation tensor}
\label{s.homog}

We now turn to stochastic homogenization. We focus on discrete elliptic equations in divergence form:
\[
\nabla^* \a(x)\nabla u(x)=f(x), \ \ x\in \Z^d,
\]
with $d \ge 3$, where $\a:\Z^d\to \msf{Sym}_d^+$ is a field of diagonal matrices. The entries $\a_{ii}(x), x\in \Z^d, i=1,\ldots,d$ are i.i.d.\ random variables defined on the probability space $(\Omega,\msf{F},\mathbb{P})$. We assume that $\a_{ii}(x) \in [1-\tau_0,1+\tau_0]$ for some fixed $\tau_0 \in(0,1)$. Let $\B$ be the set of nearest neighbor edges in $\Z^d$, $\{e_i,i=1,\ldots,d\}$ be the canonical basis, and for $e=(x,x+e_i)\in\B$, we view
\[
\a(e):=\a_{ii}(x)
\]
 as the random conductance on the edge linking $x$ to $x+e_i$. The discrete gradient and divergence for $f:\Z^d\to \R$ and $F:\Z^d\to \R^d$ are defined respectively by
\[
\nabla f=(\nabla_1f,\ldots,\nabla_d f), \ \ \nabla^*F=\sum_{i=1}^d \nabla_i^* F_i,
\]
with 
\[
\nabla_i f(x)=f(x+e_i)-f(x), \ \ \nabla_i^* F_i=F_i(x-e_i)-F_i(x).
\]
For any $e=(\underline{e},\bar{e})\in \B$ and $f:\Z^d\to \R$, we also write $\nabla f(e)=f(\bar{e})-f(\underline{e})$. For any $\xi\in \R^d$ and $i=1,\ldots,d$, we define $\xi(e_i)=\xi_i$.

There exists a deterministic matrix $\ah \in \mathsf{Sym}_d^+$ such that if $\alpha>0$ and $u^{(\eps)}$ solves
\[
(\eps^2\alpha +\nabla^* \a(x)\nabla) u^{(\eps)}(x)=\eps^2 f(\eps x), \qquad x\in \Z^d,
\]
then it was shown in \cite[Theorem 4]{kuen} that $u_\eps(x):=u^{(\eps)}(x/\eps)$ $(x\in \eps \Z^d)$ converges in $L^2(\R^d\times \Omega)$ to the deterministic solution of 
\[
(\alpha-\nabla\cdot \ah\nabla) u_\h(x)=f(x),  \qquad x\in \R^d,
\]
where $u_\eps$ is extended to $\R^d$ as a piecewise constant function. In other words, on large scales, the random coefficients $\a(x)$ behave like the homogeneous, deterministic coefficients $\ah$, and the discrete heterogeneous operator $\nabla^* \a(x)\nabla$ is averaged as the continuous homogeneous operator $-\nabla\cdot \ah \nabla$.

As was pointed out in the introduction, the corrector plays an important role in proving the convergence of $u_\eps\to u_\h$ and calculating the homogenized matrix $\ah$. The equation of the corrector in the direction of $\xi\in \R^d$ says
\begin{equation}
\nabla^* \a(x)(\nabla\phi_\xi(x)+\xi)=0, \qquad x\in \Z^d.
\label{eq:coreq}
\end{equation}
Since we assume $d\geq 3$, there exists a stationary zero-mean random field solving \eqref{eq:coreq}, as was shown in~\cite{gloria2011optimal}. 

From now on, we assume furthermore that 
\[
\a(e)=a(\zeta_e),
\] 
where $a:\R\to \R$ is a fixed twice differentiable function with bounded first and second derivatives (and taking values in $[1-\tau_0,1+\tau_0]$), and $\{\zeta_e:e\in\B\}$ is a sequence of i.i.d.\ standard Gaussian random variables. Under this technically convenient condition, the large-scale behavior of the corrector is approximately a generalized GFF. More precisely, it was shown in \cite{mourrat2014correlation, MN} that 
\begin{equation}
\label{e.conv}
\eps^{-\Ll(\frac d 2 - 1\Rr)} \phi_\xi(\, \cdot/\eps) \xrightarrow[\eps \to 0]{\text{(law)}} \Phi_\xi,
\end{equation}
where $\Phi_\xi$ solves
\begin{equation}
-\nabla \cdot \ah \nabla \Phi_\xi = \nabla \cdot W_\xi
\label{eq:corlim}
\end{equation}
for a certain white noise vector field $W_\xi$. A  heuristic derivation of \eqref{eq:corlim} can be found in \cite[Section 1]{gu-mourrat}.

The Gaussian white noise vector field $W_\xi$ appearing in \eqref{eq:corlim} has a covariance matrix $\Qh_\xi$ given explicitly by
\begin{multline}
[\Qh_\xi]_{ij}
= \sum_{k=1}^d \big\la (e_i+\nabla\phi_i)(e_k)(\xi+\nabla\phi_\xi)(e_k) \a'(e_k) \\ 
(1+\L)^{-1} \a'(e_k)(e_j+\nabla\phi_j)(e_k)(\xi+\nabla\phi_\xi)(e_k) \big\ra.
\label{eq:qij}
\end{multline}
Here $\la\cdot\ra$ denotes the expectation in $\Omega$ and $\{\phi_i=\phi_{e_i}, i=1,\ldots,d\}$ are the correctors in the canonical directions. For sufficiently smooth functions on $\Omega$, the weak derivative with respect to $\zeta_e$ is denoted by $\partial_e$, and its adjoint is $\partial_e^*=\zeta_e-\partial_e$. In \eqref{eq:qij}, we have set
\[
\L:=\sum_{e\in\B}\partial_e^*\partial_e,
\]
and
\[
\a'(e):=\partial_e  \a(e)=a'(\zeta_e).
\]
Note that the map $\xi \mapsto \Qh_\xi$ is quadratic. This object should thus be viewed as a four-fold tensor, which we propose to call the \emph{effective fluctuation tensor}. 

The convergence of the random distribution in \eqref{eq:coreq} to $\Phi_\xi$ motivates our study of generalized GFFs. Since we assume the coefficients to be i.i.d.,\ the homogenized matrix inherits the symmetries of the lattice, and $\ah$ is thus a multiple of the identity. 

\smallskip 

The goal of this section is to obtain an expansion of $\Qh_\xi$ in the regime of small ellipticity contrast. This is interesting per se, e.g.\ as a means to compute the effective fluctuation tensor when we expect only a small amount of random fluctuation in the underlying medium, in which case $\Qh_\xi$ may be replaced by the series expansion up to certain order depending on the desired accuracy. It also allows us to give examples of environments such that $\Qh_\xi$ is not a multiple of the identity, which implies, in view of Theorem~\ref{t.non-markov}, that the limiting field $\Phi_\xi$ does \emph{not} satisfy the Markov property. 

Without loss of generality, we may assume that $\a$ takes the form
\begin{equation}
\label{e.struc-a}
\a(x)=\Id+\tau \b(x)
\end{equation}
where $\b$ is a field of diagonal matrices with $\la\b\ra=0$ and $\|\b\|_{L^\infty} \le 1$, and $\tau \in (0,1)$ is a free parameter that we will take sufficiently small. We let $\delta_{ij}$ be the Kronecker symbol, that is, $\delta_{ij} = 1$ if $i = j$, and $\delta_{ij} = 0$ otherwise. 
\begin{thm}
\label{t.expansion}
There exists $\tau_0 > 0$ such that for every $\tau \in [0,\tau_0)$,  $\xi\in \R^d$ and $i,j=1,\ldots,d$, we have the convergent expansion
\[
[\Qh_\xi]_{ij}=\tau^2\sum_{l=0}^\infty c_{\xi,l,i,j}\tau^l,
\]
where $c_{\xi,l,i,j}$ can be computed explicitly as explained below, and
\begin{equation}
\label{c0}
c_{\xi,0,i,j} = \delta_{ij} \xi_i^2 \, \langle \b(e)^2 \rangle ,
\end{equation}
\begin{equation}
\label{c1}
i \neq j \quad \implies \quad c_{\xi,1,i,j} = 0,
\end{equation}
\begin{equation}
\label{c2}
i \neq j \ \mbox{ and } \xi_i \xi_j \neq 0 \quad \implies \quad c_{\xi,2,i,j} \neq 0.
\end{equation}
In particular, if $\tau >0$ is sufficiently small and $\xi \neq 0$, then $\Qh_\xi$ is not a multiple of identity.
\end{thm}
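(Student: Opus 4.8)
The plan is to expand every ingredient of \eqref{eq:qij} as a power series in $\tau$ and to read off the coefficients $c_{\xi,l,i,j}$. Under \eqref{e.struc-a} each factor $\a'(e)=\partial_e\a(e)$ equals $\tau\,\b'(e)$, where $\b'(e):=\partial_e\b(e)$ is a bounded function of $\zeta_e$ alone, and the two such factors in \eqref{eq:qij} account for the global prefactor $\tau^2$. The corrector itself is analytic in $\tau$: writing \eqref{eq:coreq} as $\nabla\phi_\xi=-\tau\,\Pi\big(\b(\nabla\phi_\xi+\xi)\big)$, where $\Pi=\nabla(\nabla^*\nabla)^{-1}\nabla^*$ is the orthogonal projection onto gradient fields (of norm $\le 1$) and multiplication by $\b$ has norm $\le\|\b\|_{L^\infty}\le 1$, the right-hand side is a contraction for $\tau<1$. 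This yields a geometrically convergent Neumann series $\nabla\phi_\xi=\sum_{n\ge 1}\tau^n\nabla\phi_\xi^{(n)}$ (and likewise for $\phi_i,\phi_j$), with $\nabla\phi_\xi^{(1)}=-\Pi(\b\xi)$ and $\nabla\phi_\xi^{(n)}=-\Pi(\b\,\nabla\phi_\xi^{(n-1)})$. Substituting these series into \eqref{eq:qij}, which is multilinear in the correctors and involves only the bounded operator $(1+\L)^{-1}$ ($\L\ge 0$) together with Gaussian averages of quantities with finite moments, produces an absolutely convergent expansion $[\Qh_\xi]_{ij}=\tau^2\sum_{l\ge 0}c_{\xi,l,i,j}\tau^l$ for $\tau$ below some $\tau_0>0$, each $c_{\xi,l,i,j}$ being an explicit finite combination of lattice Green-function weights and Gaussian moments of $\b$ and its derivatives.

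For the leading coefficient \eqref{c0} I would drop the correctors (their contribution is $O(\tau)$), reducing the $\tau^0$ term to $\sum_k\delta_{ik}\delta_{jk}\,\xi_k^2\,\la \b'(e_k)(1+\L)^{-1}\b'(e_k)\ra$, where $e_k$ denotes the canonical edge based at the origin in direction $k$. The sum collapses to $\delta_{ij}\,\xi_i^2\,\la\b'(e_i)(1+\L)^{-1}\b'(e_i)\ra$. Since $\b'(e_i)$ depends only on $\zeta_{e_i}$ and $\L$ acts on $\sigma(\zeta_{e_i})$-measurable functions as the one-dimensional number operator, I expand $\b$ in Hermite polynomials of $\zeta_{e_i}$: the factor produced by the lowering operator $\partial_{e_i}$ and the factor produced by $(1+\L)^{-1}$ cancel exactly, chaos by chaos, so that $\la\b'(e_i)(1+\L)^{-1}\b'(e_i)\ra=\la\b(e_i)^2\ra$ (the hypothesis $\la\b\ra=0$ removing the constant mode). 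This is \eqref{c0}.

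The vanishing \eqref{c1} is where the diagonal structure of $\b$ enters. At order $\tau^1$ exactly one first-order corrector appears, so $c_{\xi,1,i,j}$ is a sum of terms of the form $\xi_j^2\,\la\,\nabla\phi_i^{(1)}(e_j)\,\b'(e_j)(1+\L)^{-1}\b'(e_j)\,\ra$ and, after using self-adjointness of $(1+\L)^{-1}$, $\xi_i^2\,\la F(\zeta_{e_i})\,\nabla\phi_j^{(1)}(e_i)\ra$ with $F$ a function of $\zeta_{e_i}$ alone. Because $\b$ is a field of diagonal matrices, the first-order corrector $\phi_m^{(1)}=-(\nabla^*\nabla)^{-1}\nabla^*(\b e_m)$ depends only on the Gaussians carried by edges in direction $m$. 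For $i\neq j$ the base edge lies in a direction different from the relevant corrector, so $\nabla\phi_i^{(1)}(e_j)$ (resp.\ $\nabla\phi_j^{(1)}(e_i)$) is independent of $\zeta_{e_j}$ (resp.\ $\zeta_{e_i}$) and has mean zero; the expectation therefore factorizes and vanishes, giving \eqref{c1}.

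The nonvanishing \eqref{c2} is the main obstacle. First I would invoke the hyperoctahedral symmetry (permutations and sign flips of the coordinate axes) of the i.i.d.\ environment: viewing $\xi\mapsto\Qh_\xi$ as a four-tensor, sign-flip invariance forces, for $i\neq j$, the component $[\Qh_\xi]_{ij}$ to be proportional to $\xi_i\xi_j$, so that $c_{\xi,l,i,j}=\tilde c_l\,\xi_i\xi_j$ for scalars $\tilde c_l$ independent of $\xi$; then \eqref{c0} and \eqref{c1} read $\tilde c_0=\tilde c_1=0$, and \eqref{c2} reduces to the single scalar claim $\tilde c_2\neq 0$. To establish this I would specialize to $\xi=e_i+e_j$ and compute the entire $\tau^2$ contribution, which combines the genuinely second-order corrector $\nabla\phi^{(2)}$, the products $\nabla\phi_i^{(1)}\nabla\phi_\xi^{(1)}$ of first-order correctors, and the action of $(1+\L)^{-1}$ across the various Hermite chaoses. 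The difficulty is one of non-cancellation: a priori the explicit finite sum of discrete Green-function weights against Gaussian moments could vanish. I expect to resolve this by isolating one non-cancelling contribution — for instance the lowest-chaos (quadratic in $\b$) part, which reduces to a strictly signed lattice sum — and verifying it is not annihilated by the remaining terms. Finally, the concluding statement follows by a short case distinction: if $\xi\neq 0$ has a vanishing coordinate, then by \eqref{c0} two diagonal entries of $\Qh_\xi$ already differ at order $\tau^2$; if every coordinate of $\xi$ is nonzero, then by \eqref{c2} some off-diagonal entry is nonzero at order $\tau^4$. In either case $\Qh_\xi$ is not a multiple of the identity, and Theorem~\ref{t.non-markov} then shows that $\Phi_\xi$ is not Markovian.
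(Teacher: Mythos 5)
Your overall strategy (Neumann series for the corrector in powers of $\tau$, substitution into the quartic formula \eqref{eq:qij}, order-by-order computation using the i.i.d.\ structure) matches the paper's, and your treatments of \eqref{c0} and \eqref{c1} are essentially correct. But there are two genuine gaps. First, your justification of the convergent expansion rests on $\Pi=\nabla(\nabla^*\nabla)^{-1}\nabla^*$ being an orthogonal projection of norm $\le 1$; that is an $L^2(\Omega)$ statement, and $L^2$ convergence of the Neumann series for $\nabla\phi_\eta$ does not allow you to expand \eqref{eq:qij}, which is \emph{quartic} in the corrector gradients. To interchange the expectation with the product of four series you need convergence in $L^p(\Omega)$ for $p\ge 4$, and the $L^p$-boundedness of $D(\lambda-\Delta)^{-1}D^*$ on the probability space is not a soft fact: it is the paper's Lemma~\ref{lem:bdp}, proved in Appendix~\ref{ap.Lp} by lifting a deterministic weak-type $(1,1)$ estimate of Biskup--Salvi--Wolff to $\Omega$ and interpolating. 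Your contraction argument does not supply this, so the claimed ``absolutely convergent expansion'' is unjustified as written. (A related smaller point: defining $(-\Delta)^{-1}D^*$ on the probability space requires a mass regularization $\lambda\downarrow 0$, handled by the paper's Lemma~\ref{lem:pro}.)

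Second, and more seriously, you do not actually prove \eqref{c2}: after correctly reducing it via sign-flip symmetry to a single scalar claim $\tilde c_2\neq 0$, you identify the possible cancellation in the explicit fourth-order coefficient as ``the main obstacle'' and say you ``expect to resolve'' it. That non-cancellation \emph{is} the content of \eqref{c2} and cannot be left as an expectation. The paper settles it by enumerating the fourth-order contributions: the term containing one second-order corrector vanishes because $\sum_{y\in\Z^d}\nabla\nabla_i G(e_j,y)=0$ (a sum of a gradient), the cross term pairing $X_{1,e_i}$ with $X_{1,e_j}$ vanishes by independence since $i\neq j$, and the two surviving terms combine into
\[
c_{\xi,2}=\xi_i\xi_j\sum_{y\in\Z^d}|\nabla\nabla_i G(e_j,y)|^2\Big[\la \b(e_i)^2\ra\,\la \b'(e_j)(1+\L)^{-1}\b'(e_j)\ra+\la \b(e_i)\b'(e_j)(1+\L)^{-1}\b'(e_j)\b(e_i)\ra\Big],
\]
which is $\xi_i\xi_j$ times a nonzero quantity: both bracketed terms are nonnegative (the second because $(1+\L)^{-1}$ is a positive operator), the first is strictly positive since $\la\b'(e)(1+\L)^{-1}\b'(e)\ra=\la\b(e)^2\ra>0$, and the Green-function sum does not vanish. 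Until you carry out this computation (or an equivalent one), \eqref{c2} is not established. Your concluding case distinction for the ``in particular'' statement is fine once \eqref{c0} and \eqref{c2} are in hand.
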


We first perform a formal expansion of $\Qh_\xi$ and observe that \eqref{c0}--\eqref{c2} should hold. We then justify the full expansion rigorously. 

\subsection{A formal expansion}
\label{s:forex}

In view of \eqref{e.struc-a}, the corrector equation \eqref{eq:coreq} in the direction $\eta\in \R^d$ can be rewritten as
\[
-\Delta \phi_\eta=-\tau\nabla^* \b(\nabla\phi_\eta+\eta),
\]
where $\Delta:=-\nabla^*\nabla$ is the discrete Laplacian. Formally, we can write
\[
\nabla\phi_\eta=-\tau \nabla (-\Delta)^{-1}\nabla^* \b(\nabla\phi_\eta+\eta).
\]
If the operator $-\tau \nabla(-\Delta)^{-1}\nabla^* \b$ is a contraction (note that $\tau\ll1$), then
\begin{equation}
\eta+\nabla\phi_\eta=\sum_{k=0}^\infty X_{k,\eta},
\label{eq:exphi}
\end{equation}
 with 
\begin{equation}
X_{k,\eta}=[-\tau \nabla(-\Delta)^{-1}\nabla^*\b]^k \eta.
\label{eq:forex}
\end{equation}
There are four factors of $\eta+\nabla\phi_\eta$ appearing in \eqref{eq:qij} with $\eta=e_i,e_j,\xi$, which we will replace by the expansion in \eqref{eq:exphi}. Since $\a'(e)=\tau \b'(e)$ for any $e\in \B$, we obtain an expansion of $[\Qh_\xi]_{ij}$ in terms of~$\tau$ written as:
\begin{equation}
[\Qh_\xi]_{ij}=\tau^2\sum_{l=0}^\infty c_{\xi,l}\tau^l,
\end{equation}
where we suppressed the dependence of $c_{\xi,l}$ on $i,j$ in the notation.
We emphasize that $\sum_{l=0}^\infty c_{\xi,l}\tau^l$ is an expansion of 
\begin{multline}
\frac{1}{\tau^2}[\Qh_\xi]_{ij}
= \sum_{k=1}^d \big\la (e_i+\nabla\phi_i)(e_k)(\xi+\nabla\phi_\xi)(e_k) \b'(e_k) \\ 
(1+\L)^{-1} \b'(e_k)(e_j+\nabla\phi_j)(e_k)(\xi+\nabla\phi_\xi)(e_k) \big\ra,
\end{multline}
where the only $\tau-$dependent terms are those factors of $\eta+\nabla\phi_\eta$ with $\eta=e_i,e_j,\xi$.
%
The following are simple calculations using the i.i.d.\  structure of the random coefficients.

We introduce some notation. Let $G(x,y)$ be the Green function of the discrete Laplacian $-\Delta$, and $\nabla\nabla G(x,y)$ be the Hessian matrix such that 
\[
[\nabla\nabla G(x,y)]_{ij}=\nabla_{i}\nabla_{j}G(x,y),
\]
where $\nabla_{i},\nabla_{j}$ are with respect to the $x,y$ variable respectively. For any edge $e=(\underline{e},\bar{e})\in \B$, we also write
\[
\nabla \nabla G(e,y)=\nabla G(\bar{e},y)-\nabla G(\underline{e},y).
\]

\medskip
\textbf{Second order.} To get the second order term, we replace all factors in \eqref{eq:qij} of the form $\eta+\nabla\phi_\eta$ by $X_{0,\eta}=\eta$ to obtain
\[
\begin{aligned}
c_{\xi,0}=&\sum_{k=1}^d \la \delta_{ik}\xi_k\b'(e_k) (1+\L)^{-1}\b'(e_k)\delta_{jk}\xi_k\ra\\
=&\la \b'(e)(1+\L)^{-1}\b'(e)\ra \sum_{k=1}^d  \delta_{ik}\delta_{jk}\xi_k^2=\delta_{ij} \xi_i^2 \, \langle \b(e)^2 \rangle,
\end{aligned}
\]
where we used \cite[Proposition~3.1]{mourrat2014correlation} in the last step. 

\medskip

\textbf{Third order.} From now on, we focus on the case $i \neq j$. To get the third order term, we replace one of the four factors of $\eta+\nabla\phi_\eta$ in \eqref{eq:qij} by $X_{1,\eta}$ and all other three by $X_{0,\eta}=\eta$. We write
\[
\begin{aligned}
X_{1,\eta}(x)&=[-\tau \nabla(-\Delta)^{-1}\nabla^* \b \eta](x)\\
&=-\tau \sum_{y\in \Z^d} \nabla\nabla G(x,y) \b(y)\eta,
\end{aligned}
\]
which is a vector for any $x\in \Z^d$. We also write
\[
X_{1,\eta}(e)=-\tau \sum_{y\in \Z^d} \nabla\nabla G(e,y)\cdot \b(y)\eta,
\]
which is a scalar for any $e\in \B$. We claim that we only need to consider $e_i+\nabla\phi_i$ or $e_j+\nabla\phi_j$. Otherwise if we replace $\xi+\nabla\phi_\xi$ by $X_{1,\xi}$, i.e., in \eqref{eq:qij}
\[
e_i(e_k)+\nabla \phi_i(e_k)\mapsto  e_i(e_k), \ \ e_j(e_k)+\nabla \phi_j(e_k)\mapsto  e_j(e_k),
\]
and
\[
\xi(e_k)+\nabla\phi_\xi(e_k)\mapsto -\tau \sum_{y\in \Z^d} \nabla\nabla G(e_k,y)\cdot \b(y)\xi,
\]
we observe that there is a factor of $e_i(e_k)e_j(e_k)=\delta_{ik}\delta_{jk}=0$ since $i\neq j$. Therefore, we have
\[
\begin{aligned}
c_{\xi,1}=&-\xi_k^2\sum_{k=1}^d \langle \sum_{y\in \Z^d} \nabla\nabla G(e_k,y)\cdot \b(y)e_i\b'(e_k) (1+\L)^{-1}\b'(e_k)\delta_{jk}\ra\\
&-\xi_k^2\sum_{k=1}^d \langle \b'(e_k)\delta_{ik} (1+\L)^{-1}\b'(e_k) \sum_{y\in \Z^d} \nabla\nabla G(e_k,y)\cdot \b(y)e_j\ra.
\end{aligned}
\]
For the first term on the r.h.s. of the above expression, we can write
\[
\begin{aligned}
&\la \nabla\nabla G(e_k,y)\cdot \b(y)e_i\b'(e_k) (1+\L)^{-1}\b'(e_k)\delta_{jk}\ra\\
=&\la\nabla \nabla G(e_j,y)\cdot \b(y)e_i \b'(e_j)(1+\L)^{-1}\b'(e_j)\ra \delta_{jk}.
\end{aligned}
\]
Since $i\neq j$, it is clear that $\nabla \nabla G(e_j,y)\cdot \b(y)e_i$ and $\b'(e_j)(1+\L)^{-1}\b'(e_j)$ are independent for all $y\in \Z^d$,  so we have the expectation equals zero because $\b$ has mean zero. The same discussion applies to the second term, thus
\[
c_{\xi,1}=0.
\]

\medskip
\textbf{Fourth order.} To get the fourth order term, we have multiple options. First we consider the case when we have one factor of $X_{2,\eta}$ coming from $\eta+\nabla\phi_\eta$. By the same reason as before, we can not choose $\eta=\xi$, otherwise we have a factor of $\delta_{ik}\delta_{jk}=0$. If instead we consider $\eta=e_i$ (the same discussion applies to $e_j$) and write:
\[
X_{2,e_i}(x)=\tau^2\sum_{y,z\in \Z^d}\nabla\nabla G(x,y)\b(y)\nabla\nabla G(y,z)\b(z)e_i
\]
for $x\in \Z^d$ 
or 
\[
X_{2,e_i}(e)=\tau^2\sum_{y,z\in \Z^d}\nabla\nabla G(e,y)\cdot \b(y)\nabla\nabla G(y,z)\b(z)e_i
\]
for $e\in \B$, then the contribution to $[\Qh_\xi]_{ij}$ is
\[
\xi_j^2\tau^4 \la \sum_{y,z\in \Z^d}\nabla\nabla G(e_j,y)\cdot \b(y)\nabla\nabla G(y,z)\b(z)e_i\b'(e_j)(1+\L)^{-1}\b'(e_j)\ra.
\]
By the same discussion as before, $\b(z)e_i$ is independent of $\b'(e_j)(1+\L)^{-1}\b'(e_j)$ for all $z\in \Z^d$ because $i\neq j$. Since $\nabla\nabla G(e_j,y)\cdot\b(y)\nabla\nabla G(y,z)\b(z)e_i$ is a linear combination of $\b_{kk}(y)$ and $\b_{ii}(z)$, the only nonzero contribution after taking the expectation is when $y=z$ and $k=i$:
\[ 
\begin{aligned}
&\xi_j^2\tau^4 \la \sum_{y,z\in \Z^d}\nabla\nabla G(e_j,y)\cdot \b(y)\nabla\nabla G(y,z)\b(z)e_i \b'(e_j)(1+\L)^{-1}\b'(e_j)\ra\\
=&\xi_j^2\tau^4\nabla_i\nabla_iG(0,0)\sum_{y\in \Z^d} \nabla\nabla_i G(e_j,y)\la \b_i^2(y) \b'(e_j)(1+\L)^{-1}\b'(e_j)\ra\\
=&\xi_j^2\tau^4\nabla_i\nabla_iG(0,0)\sum_{y\in \Z^d} \nabla\nabla_i G(e_j,y)\la \b^2(e_i)\ra \la \b'(e_j)(1+\L)^{-1}\b'(e_j)\ra
\end{aligned}
\]
Since $\nabla\nabla_iG(e_j,y)$ is a gradient, it follows that $\sum_{y\in \Z^d} \nabla\nabla_i G(e_j,y)=0$.

Now we consider the case when we have two factors of $X_{1,\eta}$. Recall that 
\begin{multline*}
[\Qh_\xi]_{ij}
= \sum_{k=1}^d \big\la (e_i+\nabla\phi_i)(e_k)(\xi+\nabla\phi_\xi)(e_k) \a'(e_k) \\ 
(1+\L)^{-1} \a'(e_k)(e_j+\nabla\phi_j)(e_k)(\xi+\nabla\phi_\xi)(e_k) \big\ra.
\end{multline*}
By symmetry, we only need to consider the following cases:
\[
\begin{aligned}
(i)&= \sum_{k=1}^d \la X_{1,e_i}(e_k) X_{1,\xi}(e_k)\a'(e_k)(1+\L)^{-1} \a'(e_k)\ra \delta_{jk}\xi_k, \\
(ii)&=\sum_{k=1}^d \la X_{1,e_i}(e_k)\a'(e_k) (1+\L)^{-1} \a'(e_k)X_{1,e_j}(e_k)\ra \xi_k^2,\\
(iii)&=\sum_{k=1}^d \la X_{1,e_i}(e_k)\a'(e_k) (1+\L)^{-1} \a'(e_k)X_{1,\xi}(e_k)\ra \delta_{jk}\xi_k.
\end{aligned}
\]
Note that we do not consider replacing both factors of $\xi+\nabla\phi_\xi$ since it leads to a factor of $\delta_{ik}\delta_{jk}=0$.

For $(i)$, we write
\[
X_{1,e_i}(e_k)X_{1,\xi}(e_k)=\tau^2 \sum_{y\in \Z^d}\sum_{z\in \Z^d}(\nabla\nabla G(e_k,y)\cdot \b(y)e_i)(\nabla\nabla G(e_k,z)\cdot \b(z)\xi),
\]
and in order to get a nonzero contribution in $(i)$, we only need the terms with $y=z$ and the factor $\xi_i$, i.e.,
\[
\tau^2\sum_{y\in \Z^d} |\nabla\nabla_i G(e_k,y)|^2 |\b_{ii}(y)|^2\xi_i,
\]
which implies
\[
\begin{aligned}
(i)=&\tau^4 \xi_i\xi_j\sum_{y\in \Z^d}  |\nabla\nabla_i G(e_j,y)|^2  \la |\b_{ii}(y)|^2\b'(e_j)(1+\L)^{-1}\b'(e_j)\ra
\\
=&\tau^4 \xi_i\xi_j\sum_{y\in \Z^d}  |\nabla\nabla_i G(e_j,y)|^2  \la |\b(e_i)|^2\ra\la \b'(e_j)(1+\L)^{-1}\b'(e_j)\ra.
\end{aligned}
\]
In the last step, we used the fact that $\a'(e_j)(1+\L)^{-1}\a'(e_j)$ is independent of $\b_{ii}(y)$ for all $y\in \Z^d$.

For $(ii)$, we write
\[
\begin{aligned}
&\la X_{1,e_i}(e_k)\a'(e_k)(1+\L)^{-1} \a'(e_k) X_{1,e_j}(e_k)\ra\\
=&\tau^4\sum_{y,z\in \Z^d}\la \nabla\nabla G(e_k,y)\cdot \b(y)e_i \b'(e_k)(1+\L)^{-1}\b'(e_k)\nabla\nabla G(e_k,z)\cdot\b(z)e_j\ra,
\end{aligned}
\]
and it is clear that for any $k=1,\ldots,d$, we have an independent factor of $\b(y)e_i$ or $\b(z)e_j$ since $i\neq j$, thus the above expression equals to zero.

For $(iii)$, we write 
\[
\begin{aligned}
&\la X_{1,e_i}(e_j)\a'(e_j) (1+\L)^{-1}\a'(e_j)X_{1,\xi}(e_j)\ra\\
=&\tau^4\sum_{y,z\in \Z^d}\la \nabla\nabla G(e_j,y)\cdot \b(y)e_i \b'(e_j)(1+\L)^{-1}\b'(e_j)\nabla \nabla G(e_j,z)\cdot\b(z)\xi\ra.
\end{aligned}
\]
Similarly, we need the terms with $y=z$ and the factor $\xi_i$, which gives
\[
\begin{aligned}
&\la X_{1,e_i}(e_j)\a'(e_j)(1+\L)^{-1}\a'(e_j)X_{1,\xi}(e_j)\ra\\
=&\xi_i\tau^4\sum_{y\in \Z^d}\la \nabla\nabla G(e_j,y)\cdot \b(y)e_i \b'(e_j)(1+\L)^{-1}\b'(e_j)\nabla \nabla G(e_j,y)\cdot\b(y)e_i\ra,
\end{aligned}
\]
so
\[
\begin{aligned}
(iii)=&\xi_i\xi_j\tau^4\sum_{y\in \Z^d} \la \nabla\nabla G(e_j,y)\cdot \b(y)e_i \b'(e_j)(1+\L)^{-1}\b'(e_j)\nabla \nabla G(e_j,y)\cdot \b(y)e_i\ra\\
=&\xi_i\xi_j\tau^4\sum_{y\in \Z^d} |\nabla\nabla_i G(e_j,y)|^2 \la \b_{ii}(y) \b'(e_j)(1+\L)^{-1}\b'(e_j)\b_{ii}(y)\ra\\
=&\xi_i\xi_j\tau^4\sum_{y\in \Z^d} |\nabla\nabla_i G(e_j,y)|^2   \la \b(e_i)\b'(e_j) (1+\L)^{-1} \b'(e_j)\b(e_i)\ra.
\end{aligned}
\]

To summarize, the above formal calculation shows that for $i \neq j$,
\begin{equation}
\label{e.c01}
c_{\xi,0}=c_{\xi,1}=0,
\end{equation}
and
\begin{equation}
\begin{aligned}
c_{\xi,2}= &\xi_i\xi_j\sum_{y\in \Z^d}  |\nabla\nabla_i G(e_j,y)|^2  \la |\b(e_i)|^2\ra\la \b'(e_j)(1+\L)^{-1}\b'(e_j)\ra\\
&+\xi_i\xi_j\sum_{y\in \Z^d}  |\nabla\nabla_i G(e_j,y)|^2 \la \b(e_i)\b'(e_j) (1+\L)^{-1} \b'(e_j)\b(e_i)\ra,
\end{aligned}
\label{eq:c2}
\end{equation}
which is non-zero if $\xi_i\xi_j\neq 0$. 

\subsection{Proof of the expansion.}

Before proving the expansion rigorously, we introduce more notation. Let $\zeta=(\zeta_e)_{e\in\B}\in \Omega$ denote the sample point, and for $x\in \Z^d$, we define the shift operator $\tau_x$ on $\Omega$ by $(\tau_x \zeta)_e=\zeta_{x+e}$, where $x+e:=(x+\underline{e},x+\bar{e})$ is the edge obtained by shifting $e$ by $x$. Since $\{\zeta_e\}_{e\in\mathbb{B}}$ are i.i.d., $\{\tau_x\}_{x\in \Z^d}$ is a group of measure-preserving transformations. 
We can define the operator 
\[
T_x f(\zeta)=f(\tau_x\zeta)
\]
for any measurable function $f$ on $\Omega$, and the generators of $T_x$, denoted by $\{D_i\}_{i=1}^d$, are defined by $D_if:=T_{e_i}f-f$. The adjoint $D_i^*$ is defined by $D_i^* f:=T_{-e_i}f-f$. We denote the gradient on $\Omega$ by $D=(D_1,\ldots,D_d)$ and the divergence $D^*F:=\sum_{i=1}^d D_i^*F_i$ for $F:\Omega\to \R^d$. The norm in $L^p(\Omega)$ is denoted by $\|\cdot\|_p$.

Now we can formulate the corrector equation 
\begin{equation}
\nabla^* \a(x)(\nabla\phi_\eta(x)+\eta)=0
\label{eq:newcoeq}
\end{equation}
 in the probability space as
 \begin{equation}
 D^*\a(D\phi_\eta+\eta)=0.
 \label{eq:procoeq}
 \end{equation}
We note that \eqref{eq:procoeq} holds almost surely in $\Omega$ and \eqref{eq:newcoeq} holds on $\Z^d$ for $\mathbb{P}-$a.e. $\zeta$. From now on, with an abuse of notation, we regard $\a,\phi_\eta$ as functions on $\Omega$ with
\[
\a(\zeta)=\mathrm{diag}(\{a(\zeta_{e_i})\}_{i=1,\ldots,d}).
\]
It is clear that if $\zeta \mapsto \phi_\eta(\zeta)$ solves \eqref{eq:procoeq}, then $x \mapsto \phi_\eta(\tau_x\zeta)$ solves \eqref{eq:newcoeq}. 

Since $\a=\Id+\tau\b$, \eqref{eq:procoeq} can be written as
\[
-\Delta \phi_\eta=-\tau D^*\b(D\phi_\eta+\eta)
\]
with $-\Delta=D^*D$ the Laplacian on the probability space, so formally
\[
D\phi_\eta=-\tau D(-\Delta)^{-1}D^*\b(D\phi_\eta+\eta).
\]
The operator $D(-\Delta)^{-1}D^*$ is defined by the following lemma.

\begin{lem}
Let $F=(F_i)_{i=1,\ldots,d}\in L^2(\Omega,\R^d)$, $\phi_\lambda:\Omega\to \R$ solve
\[
(\lambda-\Delta)\phi_\lambda=D^*F,
\]
and $\Psi=(\Psi_i)_{i=1,\ldots,d}=D(-\Delta)^{-1}D^*F$ be a weak limit of $D\phi_\lambda$ in $L^2(\Omega)$, then $\Psi$ is the unique function in $L^2(\Omega,\R^d)$ that satisfies the following three properties:

(i) $\la \Psi\ra=0$;

(ii) $D_i\Psi_j=D_j\Psi_i$ for all $i,j=1,\ldots,d$;

(iii) $D^*\Psi=D^*F$.

Furthermore, $\|\Psi\|_2\leq \|F\|_2$, and $D\phi_\lambda\to \Psi$ in $L^2(\Omega)$ as $\lambda\to0$.
\label{lem:pro}
\end{lem}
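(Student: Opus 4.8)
The plan is to work directly with $\phi_\lambda = (\lambda-\Delta)^{-1}D^*F$. Since the shifts are measure‑preserving, each $D_i$ is a bounded operator and $-\Delta = D^*D$ is a bounded nonnegative self‑adjoint operator on $L^2(\Omega)$; hence $\lambda-\Delta = \lambda + D^*D$ is bounded below by $\lambda>0$ and boundedly invertible, so $\phi_\lambda$ exists and is unique. Testing the equation $(\lambda-\Delta)\phi_\lambda = D^*F$ against $\phi_\lambda$ gives the energy identity
\[
\lambda\|\phi_\lambda\|_2^2 + \|D\phi_\lambda\|_2^2 = \la D\phi_\lambda, F\ra \le \|D\phi_\lambda\|_2\,\|F\|_2,
\]
from which I read off the uniform bounds $\|D\phi_\lambda\|_2 \le \|F\|_2$ and $\lambda\|\phi_\lambda\|_2^2 \le \|F\|_2^2$. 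The first bound makes $\{D\phi_\lambda\}$ bounded in $L^2(\Omega,\R^d)$, so some subsequence converges weakly to a limit $\Psi$, and weak lower semicontinuity of the norm yields $\|\Psi\|_2 \le \|F\|_2$.

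I would then check that properties (i)--(iii) pass to the weak limit. Property (i) holds because $\la D_j\phi_\lambda\ra = 0$ for each $\lambda$ (the mean is shift‑invariant) and the mean is a weakly continuous linear functional. Property (ii) holds because $D_i,D_j$ are bounded, hence weakly continuous, and commute, so $D_i\Psi_j$ and $D_j\Psi_i$ are the weak limits of the identical sequences $D_iD_j\phi_\lambda = D_jD_i\phi_\lambda$. For property (iii), I pair the equation with an arbitrary $g\in L^2(\Omega)$ to get $\lambda\la g,\phi_\lambda\ra + \la Dg, D\phi_\lambda\ra = \la Dg, F\ra$; the bound $\lambda\|\phi_\lambda\|_2^2 \le \|F\|_2^2$ forces $\lambda\la g,\phi_\lambda\ra = O(\lambda^{1/2})\to 0$, and weak convergence gives $\la Dg,\Psi\ra = \la Dg, F\ra$ for all $g$, i.e.\ $D^*\Psi = D^*F$.

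The crux is uniqueness, where I would invoke the Helmholtz--Hodge (Weyl) decomposition of $L^2(\Omega,\R^d)$ into the closed subspace $L^2_{\mathrm{pot}}(\Omega)$, defined as the closure of $\{Dg : g\in L^2(\Omega)\}$, and its orthogonal complement $(L^2_{\mathrm{pot}}(\Omega))^\perp = \ker D^*$. The one genuinely nontrivial input, classical in stochastic homogenization (Papanicolaou--Varadhan, Kozlov) and available because the environment is i.i.d.\ and the shift is ergodic, is that $L^2_{\mathrm{pot}}(\Omega)$ is exactly the set of mean‑zero curl‑free fields, i.e.\ those satisfying (i) and (ii). Given two solutions, their difference $\Theta$ satisfies (i)--(iii) with $F$ replaced by $0$: properties (i)--(ii) place $\Theta$ in $L^2_{\mathrm{pot}}(\Omega)$, while (iii) reads $D^*\Theta = 0$, placing $\Theta$ in $(L^2_{\mathrm{pot}}(\Omega))^\perp$, so $\Theta = 0$. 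In particular $\Psi$ itself lies in $L^2_{\mathrm{pot}}(\Omega)$, which I use to identify $\la\Psi,F\ra = \|\Psi\|_2^2$: writing $\Psi = \lim_n Dg_n$ strongly, one has $\la Dg_n, F\ra = \la g_n, D^*F\ra = \la g_n, D^*\Psi\ra = \la Dg_n,\Psi\ra \to \|\Psi\|_2^2$.

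For the strong convergence and the claim that the whole family converges, I would combine the energy identity with the identity just obtained. From $\|D\phi_\lambda\|_2^2 = \la D\phi_\lambda, F\ra - \lambda\|\phi_\lambda\|_2^2 \le \la D\phi_\lambda, F\ra \to \la\Psi,F\ra = \|\Psi\|_2^2$ I get $\limsup_\lambda\|D\phi_\lambda\|_2^2 \le \|\Psi\|_2^2$, while weak lower semicontinuity supplies the matching lower bound; hence $\|D\phi_\lambda\|_2 \to \|\Psi\|_2$, which together with weak convergence upgrades to $D\phi_\lambda \to \Psi$ strongly in $L^2(\Omega,\R^d)$. Since $\Psi$ is unique by the previous paragraph, every subsequential weak limit coincides, so the whole family converges to $\Psi$. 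I expect the uniqueness step to be the main obstacle: the rest is soft functional analysis, whereas the characterization of $L^2_{\mathrm{pot}}(\Omega)$ as the mean‑zero curl‑free fields is the one substantial ingredient, and one must be careful there to use the commutation $D_iD_j = D_jD_i$ and the ergodicity of the shift correctly.
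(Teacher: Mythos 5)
Your proposal is correct and follows essentially the same route as the paper: the paper simply cites K\"unnemann's Theorem~3 for existence and properties (i)--(iii) and Papanicolaou--Varadhan's Theorem~2 for uniqueness (which is exactly your Weyl/Helmholtz decomposition argument characterizing $L^2_{\mathrm{pot}}$ as the mean-zero curl-free fields), while you spell those standard ingredients out. The strong-convergence step --- energy identity, the identification $\la F,\Psi\ra=\|\Psi\|_2^2$ via approximation by gradients, and weak lower semicontinuity --- is identical to the paper's.
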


\begin{proof}
It is a special case of \cite[Theorem 3]{kuen}. The uniqueness follows from \cite[Theorem 2]{papanicolaou1979boundary} with minor modifications. For the strong $L^2(\Omega)$ convergence, we only need to show the convergence of $\la D\phi_\lambda\cdot D\phi_\lambda\ra$. On one hand, we have
\[
\begin{aligned}
\lambda\la \phi_\lambda^2\ra+\la D\phi_\lambda\cdot D\phi_\lambda\ra=&\la F\cdot D\phi_\lambda\ra\to  \la F \cdot \Psi\ra=\la \Psi\cdot \Psi\ra,
\end{aligned}
\]
where the last step comes from $\la \Psi\cdot DG\ra=\la F\cdot DG\ra$ for any test function $G$. This implies
\[
\limsup_{\lambda\to 0}\la D\phi_\lambda\cdot D\phi_\lambda\ra\leq \la \Psi\cdot\Psi\ra.
\]
On the other hand, 
\[
\la \Psi\cdot \Psi\ra=\lim_{\lambda\to0}\la \Psi\cdot D\phi_\lambda\ra \leq \liminf_{\lambda\to 0}\|\Psi\|_2\|D\phi_\lambda\|_2.
\]
Thus $\la D\phi_\lambda\cdot D\phi_\lambda\ra\to \la \Psi\cdot \Psi\ra$ as $\lambda\to 0$. The proof is complete.
\end{proof}


To justify the expansion, we need $L^4$ rather than $L^2$ boundedness of the operator $D(-\Delta)^{-1}D^*$. The following lemma, whose proof is postponed to Appendix~\ref{ap.Lp}, is the key to justify the expansion.
\begin{lem}
For any $p\in(1,\infty)$, there exists $C_p>0$ such that
\[
\|D(\lambda-\Delta)^{-1}D^*F\|_p\leq C_p \|F\|_p
\] 
uniformly in $F\in L^p(\Omega,\R^d)$ and $\lambda\in [0,1]$.
\label{lem:bdp}
\end{lem}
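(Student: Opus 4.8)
The plan is to prove Lemma~\ref{lem:bdp} by recognizing the operator $D(\lambda-\Delta)^{-1}D^*$ as a discrete analogue of a singular integral / Riesz-transform-type operator on the probability space, and then invoking an $L^p$ boundedness theorem of Calder\'on--Zygmund type that applies uniformly in the mass parameter $\lambda \in [0,1]$. The $L^2$ bound (with constant $1$) is already supplied by Lemma~\ref{lem:pro}, so the content here is the extrapolation to general $p \in (1,\infty)$.

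\medskip

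First I would set up the relevant framework. Since $\{\tau_x\}_{x \in \Z^d}$ is a measure-preserving group action on $(\Omega,\msf F,\P)$, the operators $D_i$ generate a discrete differential calculus, and $-\Delta = D^*D = \sum_i D_i^* D_i$ is a positive self-adjoint operator on $L^2(\Omega)$. The key structural fact I would exploit is the \emph{transference principle}: the family $\{T_x\}_{x \in \Z^d}$ lets one transfer multiplier theorems from the group $\Z^d$ (equivalently, from Fourier multipliers on $\T^d$) to multipliers built from $D$ acting on $L^p(\Omega)$. Under this correspondence, each $D_j$ transfers to the Fourier symbol $e^{i\theta_j} - 1$, and the composite operator $D_j(\lambda-\Delta)^{-1}D_k^*$ transfers to the multiplier
\[
m_{jk}^\lambda(\theta) = \frac{(e^{i\theta_j}-1)(e^{-i\theta_k}-1)}{\lambda + \sum_{l=1}^d |e^{i\theta_l}-1|^2}, \qquad \theta \in \T^d.
\]
This is exactly the discrete Riesz-transform symbol, shifted by the mass $\lambda$ in the denominator.

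\medskip

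The core of the argument is then to verify that $m_{jk}^\lambda$ is an $L^p(\T^d)$ Fourier multiplier with a norm bounded \emph{uniformly in $\lambda \in [0,1]$}, and to transfer this bound back to $L^p(\Omega)$. For the multiplier estimate I would apply the Marcinkiewicz (or H\"ormander--Mikhlin) multiplier theorem on the torus: one checks that $|m_{jk}^\lambda(\theta)| \le C$ and that the mixed derivatives $|\theta^\beta \partial^\beta m_{jk}^\lambda(\theta)|$ are bounded uniformly in $\lambda$ for the multi-indices $\beta$ required by the theorem. The crucial point is that the worst behavior occurs near $\theta = 0$, where the denominator $\lambda + \sum_l |e^{i\theta_l}-1|^2 \approx \lambda + |\theta|^2$ behaves like that of the standard continuum Riesz transform; the added nonnegative mass $\lambda$ only \emph{helps} by keeping the denominator bounded below, so the derivative bounds are majorized by the $\lambda=0$ case. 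This uniformity is what yields the $\lambda$-independent constant $C_p$. Having the torus bound, the transference principle of Coifman--Weiss (for measure-preserving flows, with $\Z^d$ as the acting group) produces the same bound on $L^p(\Omega)$, and summing over the finitely many pairs $(j,k)$ gives the claimed estimate for $D(\lambda-\Delta)^{-1}D^*F$.

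\medskip

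\textbf{The main obstacle} I anticipate is not the multiplier computation itself but making the transference rigorous on the probability space: one must confirm that the spectral calculus of $-\Delta = D^*D$ genuinely corresponds, via the $\Z^d$-action, to Fourier multipliers on $\T^d$, so that $L^p$ bounds transfer. Equivalently, if one prefers to avoid abstract transference, the obstacle becomes establishing the Calder\'on--Zygmund kernel bounds directly: writing $D(\lambda-\Delta)^{-1}D^*$ as convolution (in the group variable, under the shift action) against a kernel $K^\lambda$ and verifying the size bound $|K^\lambda(x)| \lesssim |x|^{-d}$ together with the H\"ormander regularity condition $\sum_{|x|\ge 2|h|} |K^\lambda(x-h) - K^\lambda(x)| \lesssim 1$, all uniformly in $\lambda$. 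Since these kernel estimates for the discrete Riesz transform are classical and the mass $\lambda \ge 0$ does not degrade them, the remaining work is bookkeeping, and I would carry it out in Appendix~\ref{ap.Lp} as announced.
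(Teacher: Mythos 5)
Your proposal is correct in outline, but it follows a genuinely different route from the paper's Appendix~\ref{ap.Lp}. You propose to transfer the \emph{strong} $L^p$ bound directly: identify $D_j(\lambda-\Delta)^{-1}D_k^*$ with the Fourier multiplier $(e^{i\theta_j}-1)(e^{-i\theta_k}-1)/(\lambda+\sum_l|e^{i\theta_l}-1|^2)$ on $\T^d$ (your symbol computation is right, since $D_j$ transfers to $e^{i\theta_j}-1$ and $-\Delta=D^*D$ to $\sum_l|e^{i\theta_l}-1|^2$), verify the Marcinkiewicz/Mikhlin conditions uniformly in $\lambda\ge 0$ (the added mass only increases the denominator, so uniformity does hold), and invoke Coifman--Weiss transference for the measure-preserving $\Z^d$-action; for $\lambda>0$ the kernel $\nabla\nabla G_\lambda$ is summable so transference applies verbatim, and $\lambda=0$ is recovered by a limiting argument. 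The paper instead transfers only the \emph{weak} $(1,1)$ inequality: it borrows from \cite{biskup2014central} a weak-$(1,1)$ bound for the finite-volume operator $\K_{\lambda,L}=\nabla(\lambda-\Delta)^{-1}\nabla^*$ on boxes $\Lambda_L\subset\Z^d$ (whose proof rests on the triple-gradient bound $|\nabla\nabla\nabla G_\lambda(x,y)|\le C|x-y|^{-d-1}$, i.e.\ exactly the H\"ormander regularity condition you mention in your fallback), lifts it to $L^1(\Omega)$ by a hands-on stationarity/averaging argument over $\Lambda_L$ with $L\to\infty$ (Lemma~\ref{lem:wkbd}), and then obtains $L^p$ by Marcinkiewicz interpolation against the $L^2$ bound and duality, with Fatou handling $\lambda=0$. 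The two arguments hinge on the same underlying fact (uniform-in-$\lambda$ Calder\'on--Zygmund bounds for the massive discrete Riesz transform); your multiplier-plus-transference version is arguably cleaner and avoids the finite-volume bookkeeping, whereas the paper's version avoids invoking abstract transference by proving the probabilistic weak-type inequality directly and leans on an existing reference for the deterministic input. The one point you should make explicit if you carry out your plan is the identification of the resolvent $(\lambda-\Delta)^{-1}$ on $L^2(\Omega)$ with the transferred convolution against $G_\lambda$ (the paper records this as \eqref{eq:plambda}); once that is in place your argument closes.
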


\begin{proof}[Proof of Theorem~\ref{t.expansion}]
We first rewrite \eqref{eq:qij} as
\begin{multline}
[\Qh_\xi]_{ij}
= \tau^2\sum_{k=1}^d \big\la (\delta_{ik}+D_k\phi_i)(\xi_k+D_k\phi_\xi) \b'(e_k) \\ 
(1+\L)^{-1}\b'(e_k)(\delta_{jk}+D_k\phi_j)(\xi_k+D_k\phi_\xi) \big\ra.
\label{eq:proqij}
\end{multline}
Recall that $D\phi_\eta$ solves 
\[
(\Id-\tau\cP)D\phi_\eta=\tau\cP\eta,
\]
with 
\[
\cP:=- D(-\Delta)^{-1}D^*\b.
\]
By Lemma~\ref{lem:bdp} and the fact that $\b\in L^\infty(\Omega,\R^{d\times d})$, for any $p\in (1,\infty)$ and $n\in \N_+$ we have
\[
\|\cP^n \eta\|_p\leq C_p^n\|\eta\|_\infty
\]
for some constant $C_p>0$, so
\[
D\phi_\eta=(\tau\cP+\tau^2\cP^2+\ldots)\eta,
\]
where the convergence is in $L^p(\Omega,\R^d)$ for any $p\in (1,\infty)$, provided that $\tau<1/C_p$.
Therefore, using the fact that $(1+\L)^{-1}$ is bounded from $L^q(\Omega)$ to itself for any $q\geq 2$ \cite[Proposition 3.2]{mourrat2014correlation} and $\b'\in L^\infty(\Omega,\R^{d\times d})$, we can replace the factors $\eta_k+D_k\phi_\eta$ with $\eta=e_i,e_j,\xi$ in \eqref{eq:proqij} by the series
\[
\eta_k+D_k\phi_\eta= \sum_{n=0}^\infty \tau^n(\cP^n\eta)_k,
\] 
where $(\cP^n\eta)_k$ denotes the $k-$th component of $\cP^n\eta$ with $\cP^0=\Id$. Notice that we used Lemma~\ref{lem:bdp} for $p\geq 4$ and $\lambda=0$. In other words, we obtain an expansion of $\Qh_\xi$ in terms of $\tau$:
\[
[\Qh_\xi]_{ij}=\sum_{n_1,n_2,n_3,n_4=0}^\infty \tau^{2+n_1+n_2+n_3+n_4} Q_{n_1,n_2,n_3,n_4}(i,j,\xi)
\]
with 
\[
Q_{n_1,n_2,n_3,n_4}(i,j,\xi):=\sum_{k=1}^d\la (\cP^{n_1}e_i)_k(\cP^{n_2}\xi)_k\b'(e_k) (1+\L)^{-1}\b'(e_k)(\cP^{n_3}e_j)_k(\cP^{n_4}\xi)_k\ra.
\]
Thus, $[\Qh_\xi]_{ij}=\tau^2\sum_{l=0}^\infty c_{\xi,l}\tau^l$ with
\[
c_{\xi,l}=\sum_{n_1+\ldots+n_4=l}Q_{n_1,n_2,n_3,n_4}(i,j,\xi).
\]

To compute $c_{\xi,l}$ explicitly, we define
\[
\cP_\lambda:=- D(\lambda-\Delta)^{-1}D^*\b
\]
and
\begin{equation}
Q_{n_1,n_2,n_3,n_4}^\lambda(i,j,\xi):=\sum_{k=1}^d\la (\cP_\lambda^{n_1}e_i)_k(\cP_\lambda^{n_2}\xi)_k\b'(e_k) (1+\L)^{-1}\b'(e_k)(\cP_\lambda^{n_3}e_j)_k(\cP_\lambda^{n_4}\xi)_k\ra.
\label{eq:qlambda}
\end{equation}
 By Lemmas~\ref{lem:pro} and \ref{lem:bdp}, it follows that 
 \[
 Q_{n_1,n_2,n_3,n_4}^\lambda(i,j,\xi)\to Q_{n_1,n_2,n_3,n_4}(i,j,\xi)
\]
as $\lambda\to 0$, and we have
\begin{equation}
c_{\xi,l}=\sum_{n_1+\ldots+n_4=l}\lim_{\lambda\to 0}Q_{n_1,n_2,n_3,n_4}^\lambda(i,j,\xi).
\label{eq:cl}
\end{equation}
%
%

With the mass regularization, we can write $\cP_\lambda^n$ in the physical domain
\begin{equation}
\begin{aligned}
\cP_\lambda^n\eta=& (-D(\lambda-\Delta)^{-1}D^*\b)^n\xi\\
=&(-1)^n\sum_{y_1,\ldots,y_n\in \Z^d}\left(\prod_{k=1}^n\nabla\nabla G_\lambda(y_{k-1},y_k)\b(\tau_{y_k}\zeta)\right)\eta,
\end{aligned}
\label{eq:plambda}
\end{equation}
with $y_0=0$ and $G_\lambda$ the Green function of $\lambda-\Delta$ on $\Z^d$. By plugging \eqref{eq:plambda} into \eqref{eq:qlambda} with $n=n_1,\ldots,n_4$ and $\eta=e_i,e_j,\xi$, the expectation can be computed explicitly using the i.i.d. structure of $\{\zeta_e\}_{e\in\B}$.

%

To show that $\Qh_\xi$ may not be a multiple of identity, we only need to repeat the formal calculation in Section~\ref{s:forex} verbatim with $G$ replaced by $G_\lambda$ to derive that
\[
[\Qh_\xi]_{ij}=c_{\xi,2}\tau^4+O(\tau^5)
\]
with $c_{\xi,2}$ given by \eqref{eq:c2}.
The proof is complete.
\end{proof}


\appendix

\section{A linear algebra lemma}
\label{ap.matrix}

\begin{lem}
Let $\A$ be a symmetric, positive-definite matrix. The following statements are equivalent:
\begin{enumerate}
\item[(i)] there exists a symmetric matrix $\BB$ such that 
$$
\mbox{for all $x \in \R^n$, } (x^t \A x)^2=|x|^2x^t\BB x
$$ 

\item[(ii)] the matrix $\A$ is a multiple of identity.
\end{enumerate}
\label{lem:matrix}
\end{lem}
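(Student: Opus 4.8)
The implication (ii) $\Rightarrow$ (i) is immediate: if $\A = \lambda I$ with $I$ the $n \times n$ identity matrix, then $(x^t \A x)^2 = \lambda^2 |x|^4 = |x|^2 \, x^t(\lambda^2 I) x$, so (i) holds with $\BB = \lambda^2 I$. The content is the converse, and the plan is to reduce to the diagonal case and then compare the two homogeneous quartic polynomials appearing in (i) monomial by monomial.

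First I would diagonalize. Writing $\A = O^t \mathsf{D} O$ with $O$ orthogonal and $\mathsf{D} = \mathrm{diag}(\lambda_1,\dots,\lambda_n)$, the substitution $y = Ox$ leaves $|x|^2 = |y|^2$ invariant and turns $x^t\BB x$ into $y^t(O\BB O^t)y$, where $O\BB O^t$ is again symmetric. Hence there is no loss of generality in assuming $\A = \mathrm{diag}(\lambda_1,\dots,\lambda_n)$ with each $\lambda_i > 0$, and (i) becomes the polynomial identity $\big(\sum_i \lambda_i y_i^2\big)^2 = \big(\sum_k y_k^2\big)\big(\sum_{p,q} B_{pq} y_p y_q\big)$ holding for all $y \in \R^n$.

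The heart of the argument is then elementary coefficient extraction. The left-hand side contains only the monomials $y_i^4$ and $y_i^2 y_j^2$; in particular every monomial of odd type such as $y_i^3 y_j$ (for $i \neq j$) has coefficient zero. On the right-hand side the coefficient of $y_i^3 y_j$ equals $2B_{ij}$, which forces $B_{ij} = 0$ for $i \neq j$, i.e.\ $\BB$ is diagonal, say $\BB = \mathrm{diag}(\mu_1,\dots,\mu_n)$. Comparing the coefficients of $y_i^4$ then gives $\mu_i = \lambda_i^2$, while comparing the coefficients of $y_i^2 y_j^2$ for $i \neq j$ gives $2\lambda_i\lambda_j = \mu_i + \mu_j = \lambda_i^2 + \lambda_j^2$, that is $(\lambda_i - \lambda_j)^2 = 0$. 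Thus $\lambda_i = \lambda_j$ for all $i,j$, so all eigenvalues of $\A$ coincide and $\A$ is a multiple of the identity, establishing (ii).

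I expect no serious obstacle: the only care needed is the bookkeeping of how each monomial of the quartic on the right-hand side arises from the product $\big(\sum_k y_k^2\big)\big(\sum_{p,q}B_{pq}y_py_q\big)$. As an alternative that avoids matching all monomials, one can restrict the identity in (i) to the plane spanned by two orthonormal eigenvectors $u,v$ of $\A$ with eigenvalues $\lambda,\mu$; evaluating at $x = (u\pm v)/\sqrt 2$ yields $u^t\BB v = \mp(\lambda-\mu)^2/4$ from the two sign choices, and these two relations are compatible only when $\lambda = \mu$. Either route concludes the proof.
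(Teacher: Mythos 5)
Your proof is correct, but it takes a structurally different route from the paper's. The paper does not diagonalize: it argues by induction on the dimension $n$, restricting the identity to the hyperplane $x_n=0$ to conclude via the induction hypothesis that the top-left $(n-1)\times(n-1)$ blocks satisfy $\A_{ij}=c\delta_{ij}$ and $\BB_{ij}=c^2\delta_{ij}$, and then extracting the coefficients of $x_i^3x_n$, $x_ix_n^3$, $x_i^2x_n^2$ and $x_n^4$ to force $\A_{in}=0$ and $\A_{nn}=c$. You instead use the orthogonal invariance of $|x|^2$ to reduce at the outset to $\A=\mathrm{diag}(\lambda_1,\dots,\lambda_n)$, after which a single global monomial comparison (the $y_i^3y_j$ coefficients kill the off-diagonal entries of $\BB$, and the $y_i^4$ and $y_i^2y_j^2$ coefficients give $2\lambda_i\lambda_j=\lambda_i^2+\lambda_j^2$) finishes without induction; your coefficient bookkeeping checks out. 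What the diagonalization buys is a cleaner, induction-free argument that isolates the essential identity $(\lambda_i-\lambda_j)^2=0$; what the paper's induction buys is that it never needs to invoke the spectral theorem and works entry-by-entry on the original matrix. Your two-dimensional alternative at the end (evaluating at $(u\pm v)/\sqrt2$ for orthonormal eigenvectors) is also valid, provided you note, as you implicitly do, that evaluating at $x=u$ and $x=v$ first gives $u^t\BB u=\lambda^2$ and $v^t\BB v=\mu^2$.
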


\begin{proof}
The implication (ii) $\implies$ (i) is obvious. We prove the converse implication by induction on the size $n$ of the matrix. Letting $A$ be an $n\times n$ matrix and assuming that the result holds for matrices of size $n-1$, we can choose $x=(x_1,\ldots,x_{n-1},0)$ to obtain $\A_{ij}=c\delta_{ij}$ and $\BB_{ij}=c^2\delta_{ij}$ for some constant $c>0$ with $i,j=1,\ldots,n-1$. Now, for any $i<n$, by considering the coefficients of $x_i^3x_n$ and $x_ix_n^3$, we have
\[
2c\A_{in}=\BB_{in}, \ \ 2\A_{nn}\A_{in}=\BB_{in},
\]
which implies
\begin{equation}
\A_{in}(\A_{nn}-c)=0.
\label{eq:ma1}
\end{equation}
By considering the coefficients of $x_i^2x_n^2$ and $x_n^4$, we have
\[
2c \A_{nn}+\A_{in}^2=c^2+\BB_{nn}, \ \ \A_{nn}^2=\BB_{nn},
\]
which implies
\begin{equation}
(\A_{nn}-c)^2=\A_{in}^2.
\label{eq:ma2}
\end{equation}
Combining \eqref{eq:ma1} and \eqref{eq:ma2}, we have $\A_{nn}=c, \A_{in}=0$. The proof is complete.
\end{proof}

\section{$D(\lambda-\Delta)^{-1}D^*$ is bounded from $L^p(\Omega)$ to itself}
\label{ap.Lp}

The goal here is to prove the boundedness of $D(\lambda-\Delta)^{-1}D^*$ from $L^p(\Omega)$ to itself, and 
 we will borrow a deterministic estimate from \cite{biskup2014central}.
 
 For $L\in \Z_+$, let $\Gamma_L$ be a square box
 \[
\Lambda_L=[-L,L)^d\cap Z^d.
 \]
For $\tilde{F}\in L^1(\Lambda_L, \R^d)$, we define the following integral operator for $\lambda>0$:
 \begin{equation}
 \K_{\lambda,L} \tilde{F}=\nabla (\lambda-\Delta)^{-1}\nabla^* \tilde{F}=\sum_{y\in \Lambda_L} \nabla\nabla G_\lambda(\cdot,y)\tilde{F}(y),
 \label{eq:defK}
 \end{equation}
 where $G_\lambda$ is the Green function of $\lambda-\Delta$ on $\Z^d$. We claim that the following weak type-$(1,1)$ estimate holds:
 \begin{equation}
 |\{x\in \Lambda_L: |\K_{\lambda,L} \tilde{F}(x)|>\alpha\}|\leq \frac{C}{\alpha}\sum_{x\in \Lambda_L}|\tilde{F}(x)|
 \label{eq:wk11}
 \end{equation}
 for some $C>0$ independent of $\alpha,\lambda,L>0$. In \cite[Lemma 4.6]{biskup2014central}, \eqref{eq:wk11} was shown with $G_\lambda$ in \eqref{eq:defK} replaced by the Green function of Laplacian with zero boundary condition in $\Lambda_L$. The same proof works in our case since the only ingredient we need to change in their proof is the following bound on the triple gradient of $G_\lambda$, which was given by \cite[Lemma 4.9]{biskup2014central}:
 \begin{equation}
 |\nabla_{y,i}\nabla_{x,j}\nabla_{y,k} G_\lambda(x,y)|\leq C|x-y|^{-d-1}
 \label{eq:3gr}
 \end{equation}
 for some $C>0$ independent of $\lambda>0,i,j,k=1,\ldots,d$.
 
 The estimate \eqref{eq:wk11} on the physical space can be lifted up to the probability space:
 \begin{lem}
 For any $F\in L^1(\Omega,\R^d)$, we have
 \[
 \P(|D(\lambda-\Delta)^{-1}D^*F|>\alpha)\leq \frac{C}{\alpha}\la|F|\ra
 \]
 for some constant $C>0$ independent of $\lambda,\alpha>0$.
 \label{lem:wkbd}
 \end{lem}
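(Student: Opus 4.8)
The plan is to transfer the deterministic weak type-$(1,1)$ bound \eqref{eq:wk11} to the probability space through the standard dictionary between operators on $\Omega$ and translation-covariant operators on $\Z^d$. It suffices to treat $F \in L^1(\Omega,\R^d) \cap L^2(\Omega,\R^d)$: on this dense class the operator $D(\lambda-\Delta)^{-1}D^*$ is unambiguously defined (via Lemma~\ref{lem:pro}, and directly since $\lambda-\Delta$ is genuinely invertible on $L^2(\Omega)$ when $\lambda>0$), and the estimate for general $F \in L^1$ then follows by approximation. Writing $\Psi := D(\lambda-\Delta)^{-1}D^*F$ and $\tilde F(y) := F(\tau_y \zeta)$, the correspondences $D_i \leftrightarrow \nabla_i$ and $D_i^* \leftrightarrow \nabla_i^*$ under the map $f \mapsto f(\tau_\cdot \zeta)$ give the pointwise representation $\Psi(\tau_x \zeta) = \sum_{y \in \Z^d} \nabla\nabla G_\lambda(x,y)\,\tilde F(y) =: \K_\lambda \tilde F(x)$, exactly as in \eqref{eq:plambda}; the series converges absolutely almost surely because $\tilde F$ grows at most polynomially (Borel--Cantelli) while $\nabla\nabla G_\lambda$ decays exponentially for $\lambda>0$.

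First I would use stationarity to turn the target probability into a spatial average. Since each $\tau_x$ is measure-preserving, $\P(|\Psi|>\alpha) = \P(|\K_\lambda \tilde F(x)|>\alpha)$ for every $x$, so for every box $\Lambda_L$,
\[
\P(|\Psi|>\alpha) = \frac{1}{|\Lambda_L|}\,\E\big[\#\{x \in \Lambda_L : |\K_\lambda \tilde F(x)| > \alpha\}\big].
\]
The left-hand side is independent of $L$, which is precisely what makes the finite-volume estimate usable in the limit. Next I would compare the infinite-volume kernel with the finite-volume one: for $x \in \Lambda_L$ write $\K_\lambda \tilde F(x) = \K_{\lambda,L}\tilde F(x) + S_L(x)$ with $S_L(x) := \sum_{y \notin \Lambda_L} \nabla\nabla G_\lambda(x,y)\,\tilde F(y)$. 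For any $t \in (0,\alpha)$ the event $\{|\K_\lambda \tilde F(x)|>\alpha\}$ is contained in $\{|\K_{\lambda,L}\tilde F(x)|>\alpha - t\} \cup \{|S_L(x)|>t\}$, so after taking expectations and dividing by $|\Lambda_L|$ the first part is bounded by $\frac{C}{\alpha-t}\la |F|\ra$ directly from \eqref{eq:wk11} (its constant being uniform in $\lambda$ and $L$, and its right-hand side averaging to $|\Lambda_L|^{-1}\sum_{y\in\Lambda_L}\E|\tilde F(y)| = \la|F|\ra$), while the second part is bounded by Markov's inequality by $\frac{\la|F|\ra}{t}\,\varepsilon_L(\lambda)$, where $\varepsilon_L(\lambda) := |\Lambda_L|^{-1}\sum_{x \in \Lambda_L}\sum_{y \notin \Lambda_L}|\nabla\nabla G_\lambda(x,y)|$.

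The crux is to show $\varepsilon_L(\lambda) \to 0$ as $L \to \infty$ at fixed $\lambda>0$. This is where the mass is essential: the mixed second derivative of the massive lattice Green function decays exponentially, $\sum_{y \notin \Lambda_L}|\nabla\nabla G_\lambda(x,y)| \le C(\lambda)\,e^{-c\sqrt{\lambda}\,\mathrm{dist}(x,\Lambda_L^c)}$, and since only $O(L^{d-1})$ points of $\Lambda_L$ lie within any fixed distance of the boundary, summing gives $\varepsilon_L(\lambda) \lesssim C(\lambda)/L \to 0$ (the purely polynomial bound $|\nabla\nabla G_\lambda|\lesssim|x-y|^{-d}$ would \emph{not} suffice here, as the boundary tail is then only logarithmically summable). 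Letting $L \to \infty$ kills the $\varepsilon_L$ term and yields $\P(|\Psi|>\alpha) \le \frac{C}{\alpha - t}\la|F|\ra$ for every $t \in (0,\alpha)$; letting $t \to 0$ recovers exactly the claimed bound with the same constant $C$, uniform in $\lambda$ and $\alpha$. The main obstacle is thus the boundary-layer estimate $\varepsilon_L(\lambda)\to 0$; the infinite-volume representation of $\Psi$ and the reduction to $F \in L^1 \cap L^2$ are comparatively routine.
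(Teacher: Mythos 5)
Your proposal is correct and follows essentially the same route as the paper: transfer the finite-volume weak type-$(1,1)$ bound \eqref{eq:wk11} to $\Omega$ by stationarity and spatial averaging over $\Lambda_L$, then show the tail $\sum_{y\notin\Lambda_L}\nabla\nabla G_\lambda(x,y)\tilde F(y)$ is negligible as $L\to\infty$. The only differences are cosmetic (your $(\alpha-t,t)$ split versus the paper's $2\alpha$ doubling, and averaging the boundary error over all of $\Lambda_L$ rather than splitting off the layer $\Lambda_L\setminus\Lambda_{L-\sqrt L}$); your explicit remark that the exponential decay of $\nabla\nabla G_\lambda$ for $\lambda>0$ is what makes the tail summable is a point the paper leaves implicit.
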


\begin{proof}
We fix $\lambda,\alpha>0$. First, let $\tilde{F}(x)=F(\tau_x\omega)$ for $x\in \Z^d$. Since $F \in L^1(\Omega,\R^d)$, for almost every $\omega\in \Omega$, we have $\tilde{F}\in L^1(\Lambda_L,\R^d)$ for any $L\in \Z_+$, so by \eqref{eq:wk11}
\[
\sum_{x\in \Lambda_L}1_{|\K_{\lambda,L}\tilde{F}(x)|>\alpha}\leq \frac{C}{\alpha}\sum_{x\in \Lambda_L} |\tilde{F}(x)|.
\]
Taking expectation on both sides, we derive
\[
\frac{1}{|\Lambda_L|}\sum_{x\in \Lambda_L}\P(|\K_{\lambda,L}\tilde{F}(x)|>\alpha)\leq \frac{C}{\alpha}\la|F|\ra.
\]
We can write
\[
\begin{aligned}
\frac{1}{|\Lambda_L|}\sum_{x\in \Lambda_L}\P(|\K_{\lambda,L}\tilde{F}(x)|>\alpha)=&\frac{1}{|\Lambda_L|}\sum_{x\in \Lambda_{L-\sqrt{L}}}\P(|\K_{\lambda,L}\tilde{F}(x)|>\alpha)\\
&+\frac{1}{|\Lambda_L|}\sum_{x\in \Lambda_L\setminus\Lambda_{L-\sqrt{L}}}\P(|\K_{\lambda,L}\tilde{F}(x)|>\alpha)\\
:=&(i)+(ii).
\end{aligned}
\]
For $(ii)$, it is clear that
\[
(ii)\leq \frac{|\Lambda_L\setminus\Lambda_{L-\sqrt{L}}|}{|\Lambda_L|}\to 0
\]
as $L\to\infty$. For $(i)$, we have
\[
\sum_{y\in\Z^d}\nabla\nabla G_\lambda(x,y)\tilde{F}(y)=\K_{\lambda,L}\tilde{F}(x)+\sum_{y\in \Z^d\setminus \Lambda_L}\nabla\nabla G_\lambda(x,y)\tilde{F}(y),
\]
then
\[
\begin{aligned}
\P(|\sum_{y\in\Z^d}\nabla\nabla G_\lambda(x,y)\tilde{F}(y)|>2\alpha)&\leq \P(|\K_{\lambda,L}\tilde{F}(x)|>\alpha)\\
& +\P(|\sum_{y\in \Z^d\setminus \Lambda_L}\nabla\nabla G_\lambda(x,y)\tilde{F}(y)|>\alpha).
\end{aligned}
\]
By stationarity, 
\[
\P(|\sum_{y\in\Z^d}\nabla\nabla G_\lambda(x,y)\tilde{F}(y)|>2\alpha)=\P(|D(\lambda-\Delta)^{-1}D^*F|>2\alpha)
\]
is independent of $x\in \Z^d$. For the summation outside $\Lambda_L$, we have
\[
\begin{aligned}
\P(|\sum_{y\in \Z^d\setminus \Lambda_L}\nabla\nabla G_\lambda(x,y)\tilde{F}(y)|>\alpha)\leq &\frac{1}{\alpha}\la\sum_{y\in \Z^d\setminus \Lambda_L}|\nabla\nabla G_\lambda(x,y)\tilde{F}(y)|\ra\\
\leq &\frac{1}{\alpha}\sum_{y\in \Z^d\setminus \Lambda_L}|\nabla\nabla G_\lambda(x,y)|\to 0
\end{aligned}
\]
as $L\to\infty$, uniformly in $x\in  \Lambda_{L-\sqrt{L}}$. 

Now we have
\[
\begin{aligned}
&\frac{|\Lambda_{L-\sqrt{L}}|}{|\Lambda_L|}\P(|D(\lambda-\Delta)^{-1}D^*F|>2\alpha)-\frac{1}{\alpha|\Lambda_L|}\sum_{x\in \Lambda_{L-\sqrt{L}}}\sum_{y\in \Z^d\setminus \Lambda_L}|\nabla\nabla G_\lambda(x,y)|\\
\leq &\frac{C}{\alpha}\la|F|\ra-(ii).
\end{aligned}
\]
By sending $L\to\infty$, we obtain
\[
\P(|D(\lambda-\Delta)^{-1}D^*F|>2\alpha)\leq \frac{C}{\alpha}\la |F|\ra,
\]
which completes the proof.
\end{proof}

Using Lemma~\ref{lem:wkbd} and the fact that $D(\lambda-\Delta)^{-1}D^*$ is bounded from $L^2(\Omega,\R^d)$ to itself, we can apply the standard interpolation argument, e.g., \cite[Theorem 4.4]{biskup2014central}, to conclude that $D(\lambda-\Delta)^{-1}D^*$ is bounded from $L^p(\Omega,\R^d)$ to itself for any $p\in (1,\infty)$. 

For the case $\lambda=0$, we only need to note that for any $F\in L^2(\Omega,\R^d)$,
\[
D(\lambda-\Delta)^{-1}D^*F\to D(-\Delta)^{-1}D^*F
\]
in $L^2(\Omega,\R^d)$ as $\lambda\to 0$, so by applying Fatou's Lemma, we conclude $D(-\Delta)^{-1}D^*$ is bounded from $L^p(\Omega,\R^d)$ to itself. The proof of Lemma~\ref{lem:bdp} is complete.

%
%
%
%

\section*{Acknowledgments}  
We would like to thank the Mathematics Research Center of Stanford University for the hospitality during JCM's visit. Y.G. was partially supported by the NSF through DMS-1613301. We would like to thank the anonymous referee for his careful reading of the paper and helpful suggestions.


\end{document}